\patchcmd\Gread@eps{\@inputcheck#1 }{\@inputcheck"#1"\relax}{}{}
\theoremstyle{plain}\newtheorem{theorem}{Theorem}[section]\newtheorem{Theorem}{Theorem}\newtheorem{Corollary}{Corollary}\newtheorem{proposition}[theorem]{Proposition}\newtheorem{lemma}[theorem]{Lemma}
\theoremstyle{definition}\newtheorem{example}[theorem]{Example}\newtheorem{notation}[theorem]{Notation}\newtheorem{remark}[theorem]{Remark}%\newtheorem{construction}[subsection]{Construction}
\def\Q{\mathbb{Q}}\def\C{\mathbb{C}}\def\Z{\mathbb{Z}}\def\ZZ2{\mathbb{\Z/ 2\Z}}
\def\sb{\subset}\def\ot{\otimes}\def\t{\times}\def\sm{\setminus}
\def\a{\alpha}\def\b{\beta}\def\s{\sigma}\def\De{\Delta}\def\la{\lambda}\def\la{\lambda}\def\p{\partial}\def\S{\Sigma}
\def\ov{\overline}\def\wt{\widetilde}
\def\gl{\mathfrak{gl}}
\def\sl2{\mathfrak{sl}_2}
\def\su2{\mathfrak{su}(2)}
\def\tr{\text{tr}\,}
\def\id{\text{id}}\def\End{\text{End}}
\def\deg{\text{deg}}
\def\Usl2{U_q(\sl2)}\def\usl2{\wt{U}_q(\sl2)}
\def\kk{\mathbb{K}}
\def\CC{\mathcal{C}}
\def\ZZ{\mathcal{Z}}
\def\CC{\mathcal{C}}
\def\Z{\mathbb{Z}}
\def\C{\mathbb{C}}
\def\ot{\otimes}\def\id{\text{id}}
\def\sl{\mathfrak{sl}}
\def\CC{\mathcal{C}}
\def\sl{\mathfrak{sl}_2}
\def\top{\mathrm{top}}
\def\KK{\mathbb{K}}
\def\Uqs{U_q(\mathfrak{gl}(2|1))}
\title{A plumbing-multiplicative function from the Links-Gould invariant}
\author{Daniel López Neumann and Roland van der Veen}
\email{dlopezn@udec.cl, r.i.van.der.veen@rug.nl}
\begin{document}

\maketitle

\begin{abstract}

We prove that the Laurent polynomial in $\Z[q^{\pm 1}]$ that is the top coefficient of the Links-Gould invariant of the boundary of a Seifert surface is multiplicative under plumbing of surfaces. We deduce that the Links-Gould invariant of a fibred link in $S^3$ is $\Z[q^{\pm 1}]$-monic. As a purely topological application, we deduce a ``plumbing-uniqueness'' statement for links that bound surfaces obtained by plumbing/deplumbing unknotted twisted annuli as well as providing an obstruction for links to bound such surfaces.

%We also deduce a couple (purely topological) results that put restrictions on the type of minimal genus Seifert surfaces a given link can bound.
%I changed the abstract since we said ``such surfaces'' but corollary 3 is for all annular surfaces, not just positive ones as in corollary 2.

%namely, we obtain an obstruction for (positive) knots to admit a Seifert surface that is obtained by plumbing/deplumbing positively twisted unknotted annuli and we show a plumbing-uniqueness result for such surfaces.
\end{abstract}

\section{Introduction}

%There are two natural ways to obtain a non-semisimple representation category related to quantum $\sl$. One way is to set $q$ to be a root of unity in the quantum group $U_q(\sl)$, the other is to keep $q$ generic, but consider the quantum supergroup $\sll$ instead. In both cases, the representation category has the distinctive feature that finite dimensional weight modules might have arbitrary complex weights (as opposed to merely integer weights in the semisimple case). This feature permits to define a new type of knot polynomials, in which the variable is $q^{\la}$, where $\la$ is the complex weight. In the root of unity case, this is the family of ADO invariants, one for every primitive $2p$-th root of unity. In the $\sll$ case, one obtains the Links-Gould invariant $LG(L;p,q)\in \Z[q^{\pm 1},p^{\pm 1}]$ ($p=q^{2\la+1}$), where $q$ is generic \cite{LG:two-variable, dWLK:On-Links-Gould}.Very recently, these two link polynomials have been shown to have topological properties generalizing those of the Alexander polynomial. In \cite{LNV:genus}, a genus bound was provided for the ADO polynomials, a similar genus bound was found for Links-Gould in \cite{KT:Links-Gould}. In \cite{LNV:fibred-sl2}, the authors showed that the ADO polynomials of fibred knots have maximal degree allowed by the genus bound and they are {\em $\Z[q]$-monic}, i.e. their top coefficients were powers of the given root of unity. In this paper, we prove the corresponding statement for Links-Gould:

\def\ttop{\text{top coefficient of }}

Quantum knot invariants are quite powerful and have many intriguing algebraic properties, but it remains a challenge to apply them to answer actual topological questions. In this paper, we make progress in this direction by applying the Links-Gould invariant of links to questions on Seifert surfaces. 
\medskip

The Links-Gould invariant of a link $L\sb S^3$ is a two-variable Laurent polynomial $LG(L;p,q)\in \Z[q^{\pm 1},p^{\pm 1}]$ \cite{LG:two-variable}. It can be considered as a $q$-deformation of the Alexander polynomial, as $LG(L;ix,i)=\De_L(x^2)$ \cite{Ishii:LG-as-generalization}. It belongs to the class of ``non-semisimple'' quantum link invariants \cite{GPT:modified, CGP:non-semisimple}, in the sense that it is a special case of the Reshetikhin-Turaev construction \cite{RT1} applied to a non-semisimple braided category: the category of modules over $\Uqs$ with {\em generic $q$}. It is the simplest link polynomial in such class with generic $q$, other non-semisimple invariants \cite{ADO, CGP:non-semisimple} require $q$ to be a root of unity. 
\medskip

Very recently, it was shown that the Links-Gould invariant satisfies a genus bound \cite{KT:Links-Gould}. This result belongs to a line of recent results stating that geometric properties of the Alexander polynomial generalize to non-semisimple quantum invariants, see \cite{LNV:genus, LNV:genus-unrolled, LNV:fibred-sl2}. In this paper, we consider the well-known fact that the top coefficient of the Alexander polynomial is multiplicative under plumbing of surfaces (see Section \ref{section: plumbing} for the definition of plumbing) and that such coefficient is $\pm 1$ for fibred links. We prove that both properties generalize to the Links-Gould invariant and we give some topological applications.
\medskip

Let $\top(L,q)\in \Z[q^{\pm 1}]$ be the coefficient of $p^{1-\chi(\S)}$ in $LG(L;p,q)$, where $\S$ is a minimal genus Seifert surface for the link $L$. Note that no power of $p$ higher than $p^{1-\chi(\S)}$ can appear in $LG(L;p,q)$ by Theorem \ref{theorem: genus bound for LG} (a generalization of \cite{KT:Links-Gould}), so that the name $\top(L,q)$ is appropriate. This top coefficient is allowed to be zero. Our main theorem is the following:

\begin{Theorem}
\label{theorem: main2, plumbing multiplicativity}
 If $\S$ is a surface obtained by plumbing surfaces $\S_1$ and $\S_2$ then $$\top(\p\S,q)=\top(\p\S_1,q)\cdot \top(\p\S_2,q).$$
\end{Theorem}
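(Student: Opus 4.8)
\emph{Proof plan.} The strategy is to compute $LG(\p\S;p,q)$ from a tangle presentation naturally attached to the Seifert surface $\S$, to identify the coefficient of $p^{1-\chi(\S)}$ with a ``leading order'' version of the Reshetikhin--Turaev functor, and then to show that this leading order functor is multiplicative with respect to the gluing of such presentations induced by plumbing.

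First, I would recall the mechanism behind Theorem \ref{theorem: genus bound for LG}: $LG$ is the Reshetikhin--Turaev invariant associated to a one-parameter family $V=V_p$ of $4$-dimensional $\Uqg$-modules, and a Seifert surface $\S$ presented as a disc with $n=1-\chi(\S)$ attached bands yields a tangle (or ribbon graph) $T_\S$ with $\widehat{T_\S}=\p\S$ and $LG(\p\S)=\langle F(T_\S)\rangle$, where $F$ is the tangle functor and $\langle\,\cdot\,\rangle$ the closure (via the modified trace) that glues up the bands. The genus bound says that $F(T_\S)$, viewed as a morphism between tensor powers of $V$ with their natural $p$-grading, has $p$-degree $\le 1-\chi(\S)$. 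I would then introduce the ``top'' functor $F^{\top}$, obtained by replacing the $R$-matrix, the duality (cup/cap) morphisms and the ribbon/pivotal data by their top-$p$-degree components, with two key features: (i) on the tangles $T_\S$ coming from surfaces, $F^{\top}(T_\S)$ computes exactly the $p^{1-\chi(\S)}$-part of $F(T_\S)$; (ii) these top components compose and tensor without cancellation, so that $F^{\top}$ is (lax) functorial on the relevant subcategory and commutes with $\langle\,\cdot\,\rangle$. Concretely, the leading $p$-term of the Links--Gould $R$-matrix is triangular for a suitable ordering of the weight basis, which is what makes $F^{\top}$ computable; pinning down the leading term of every elementary morphism (including the cups/caps and the modified dimension) and verifying the no-cancellation property is the technical core.

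Second, I would translate plumbing into this picture. Using Section \ref{section: plumbing}, a plumbing of $\S_1$ and $\S_2$ along a square $D=\S_1\cap\S_2$ can be isotoped so that a band presentation of $\S$ is the side-by-side juxtaposition of band presentations of $\S_1$ and $\S_2$, joined through the region $D$; since $\chi(\S)=\chi(\S_1)+\chi(\S_2)-1$ the numbers of bands add, matching $1-\chi(\S)=(1-\chi(\S_1))+(1-\chi(\S_2))$. The decisive point is that the contribution of the plumbing region $D$ to $F$ becomes, after passing to $F^{\top}$, an identity-type pairing on the strands crossing $D$ that splits as a tensor --- the categorical shadow of the classical fact that the Seifert form of a plumbing is block triangular. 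Granting this, $F^{\top}(T_\S)=F^{\top}(T_{\S_1})\boxtimes F^{\top}(T_{\S_2})$ as morphisms, and applying $\langle\,\cdot\,\rangle$ and extracting the coefficient of $p^{1-\chi(\S)}$ yields $\top(\p\S,q)=\top(\p\S_1,q)\cdot\top(\p\S_2,q)$ (the case where one factor vanishes being automatic). Finally, to see that $1-\chi(\S)$ really is the relevant power throughout, one invokes Gabai's theorem that a plumbing of minimal genus Seifert surfaces is minimal genus (and conversely), so one may assume all three surfaces minimal and apply Theorem \ref{theorem: genus bound for LG}.

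I expect the main obstacle to be this second step together with the no-cancellation claim of the first: one must check that in the tangle diagrams arising from surfaces --- in particular in the plumbing region --- the top-$p$-degree parts of the $\Uqg$ structure morphisms are homogeneous enough that their composites again have top $p$-degree equal to the sum, and that the plumbing square genuinely degenerates to a tensor-factorizing identity under this truncation. The remaining ingredients --- additivity of $\chi$, the geometry of band presentations of plumbed surfaces, and invariance of the construction --- should be routine.
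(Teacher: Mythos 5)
Your plan defers exactly the two claims that carry all the content of the theorem, and at least one of them is not viable as stated. Claim (ii) --- that the top-$p$-degree parts of the $R$-matrix, dualities and pivotal data ``compose and tensor without cancellation'', so that a truncated functor $F^{\top}$ exists, is functorial and computes the coefficient of $p^{1-\chi(\S)}$ --- would force top $p$-degrees to be additive under the relevant compositions, hence force the coefficient of $p^{1-\chi(\S)}$ to be a product of nonzero leading entries. But the genus bound (Theorem \ref{theorem: genus bound for LG}) is only an inequality, it is not proved entrywise via a grading on the structure morphisms, and the top coefficient $\top(L,q)$ genuinely can vanish (the paper allows this explicitly); so cancellations at degree $p^{1-\chi}$ do occur and any ``no-cancellation'' functoriality must fail or be restricted to a subcategory you have not identified. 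Likewise your ``decisive point'' --- that after truncation the plumbing square degenerates to a tensor-factorizing identity pairing --- is the multiplicativity statement itself in categorical disguise; you grant it (by analogy with block-triangularity of the Seifert form) rather than prove it, and no mechanism for proving it is offered. As written, the proposal is a plausible programme, not a proof.

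For comparison, the paper never truncates the functor and never needs any degree to be attained. It first proves multiplicativity when one factor is a Hopf band (Lemma \ref{lemma: plumbing Hopf band}) by applying Ishii's cubic skein relation (Lemma \ref{lemma: Ishii's skein}) to the plumbed band and using the genus bound to see that only the $C_1$-term can reach degree $p^{1-\chi(\S)}$. Then it uses that $\End_{\CC}(V_{\a}^*\ot V_{\a})$ is $3$-dimensional with basis the horizontal double braiding, the cap-cup and the identity (Lemma \ref{lemma: 3-dimensional End and basis}): the $(2,2)$-tangle obtained by cutting $\S_1$ along the plumbing square is expanded in this basis with coefficients $a,b,c$, and closing the picture in three ways (left trace, top closure, composition with a negative double braiding) produces a linear system relating $a,b,c$ to $LG(\p\S_1)$, $LG$ of the cut surface, and $LG$ of $\S_1$ plumbed with a negative Hopf band; the Hopf-band case plus the genus bound then give $\deg_p^+(a)<1-\chi(\S_1)$, $\deg_p^+(c)\le 1-\chi(\S_1)$, and that the coefficient of $p^{1-\chi(\S_1)}$ in $b$ equals that of $LG(\p\S_1)$ (Lemma \ref{lemma: cut F along alpha, express in basis}). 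Feeding this into the plumbing picture, only the cap-cup term can contribute at degree $p^{1-\chi(\S)}$, which yields the product formula even when coefficients vanish. If you want to rescue your route, the ``no-cancellation'' and ``tensor-factorization'' steps would have to be replaced by precisely this kind of bookkeeping of the single coefficient at $p^{1-\chi}$, which is in effect the paper's skein-theoretic argument based on property (d) and \cite{KT:Links-Gould}.
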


In the remainder of this introduction we will describe various corollaries of this theorem. The first deals with the form of the Links-Gould invariant of fibred links while the other corollaries are purely topological in nature. Let $L\sb S^3$ be a fibred link with fiber surface $F$. The plane field tangent to the fibers can be extended to a plane field $\xi_F$ on $S^3$ (see \cite{GG:fibred} or \cite{LNV:fibred-sl2}) and its homotopy class is determined by its {\em Hopf invariant} $\la(\xi_F)\in\Z$. Note that $\deg_pLG(L;p,q)=2(1-\chi(F))$ follows from the results of \cite{KT:Links-Gould, Ishii:LG-as-generalization}. %We denote by $\deg_p^+f(q,p)$ the maximal power of $p$ appearing with non-zero coefficient in a polynomial $f(q,p)\in \Z[q^{\pm 1},p^{\pm 1}]$. 

\begin{Corollary}
\label{theorem: main}  
\label{Corollary: main fibred}
%SHALL WE INCLUDE THAT DEG LG is given by the genus? This follows from the KT genus bound together with the result for Alexander.
    If $L$ is fibred with fiber $F$, then 
    $$\top(L,q) =q^{1-\chi(F)-2\la(\xi_F)}.$$
   In particular, $LG(L;p,q)$ detects the Hopf invariant of $\xi_F$.% by\begin{align*}\la(\xi_F)=\frac{1}{2}(\deg_p^+  LG(L;p,q)- \text{exponent of $q$ in $\top(L,q)$}). \end{align*}
   \end{Corollary}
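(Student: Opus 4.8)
The strategy is to reduce the statement about a general fibred link to the two basic building blocks of fibred links --- the positive and negative Hopf bands --- using Theorem \ref{theorem: main2, plumbing multiplicativity} together with Giroux--Goodman's theorem that any fiber surface in $S^3$ is obtained from a disk by successively plumbing and deplumbing (positively or negatively twisted) Hopf bands. Concretely, write $F$ as a plumbing/deplumbing of Hopf bands $A_1,\dots,A_k$ (with signs $\varepsilon_i=\pm 1$), so that $1-\chi(F)=\sum_i 1-\chi(A_i)$ modulo the bookkeeping of plumbing (each plumbing drops $\chi$ appropriately). Theorem \ref{theorem: main2, plumbing multiplicativity} then gives $\top(L,q)=\prod_i \top(\partial A_i,q)$, so it suffices to compute $\top$ for the unknot bounding a once-twisted annulus in each sign, i.e.\ for the positive and negative Hopf link (the positive/negative Hopf band has boundary the corresponding Hopf link, which is fibred with fiber the annulus itself).

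The first concrete step is therefore the base case: compute $LG$ of the positive and negative Hopf links and extract the top $p$-coefficient. For the Hopf band $A_\pm$ one has $\chi(A_\pm)=-1$ [wait --- $\chi(\text{annulus})=0$], so $1-\chi=1$, and one expects $\top(\partial A_\pm,q)=q^{\mp 1}$ (more precisely $q^{1-\chi(A_\pm)-2\lambda(\xi_{A_\pm})}$ with $1-\chi(A_\pm)=1$ and $\lambda(\xi_{A_\pm})=0$ or $1$ depending on sign). This is a direct computation from the $R$-matrix of $\Uqs$ on the two-component torus link, or can be read off from known formulas for $LG$ of torus links in the literature; the point is only to pin down the exponent of $q$ and that the coefficient is exactly a monomial with coefficient $1$.

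The second step is additivity of the two exponents appearing in the claimed formula under plumbing. The quantity $1-\chi(F)$ is additive under plumbing essentially by definition (Section \ref{section: plumbing}), matching the multiplicativity of $\top$. The genuinely substantive input is that the Hopf invariant $\lambda(\xi_F)$ is \emph{additive} under plumbing of fiber surfaces: $\lambda(\xi_{F_1 * F_2})=\lambda(\xi_{F_1})+\lambda(\xi_{F_2})$. This is a known property of the plane fields associated to fibrations (it follows from the Murasugi-sum/plumbing description of the contact structure, cf.\ the references \cite{GG:fibred} and the treatment in \cite{LNV:fibred-sl2}); I would cite it, or if needed reprove it via the fact that plumbing corresponds to a Legendrian connected-sum type operation on the associated open books, under which the $d_3$/Hopf invariant adds. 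Granting this, one has $1-\chi(F)-2\lambda(\xi_F)=\sum_i\big(1-\chi(A_i)-2\lambda(\xi_{A_i})\big)$, and combining with Theorem \ref{theorem: main2, plumbing multiplicativity} and the base case yields $\top(L,q)=\prod_i q^{1-\chi(A_i)-2\lambda(\xi_{A_i})}=q^{1-\chi(F)-2\lambda(\xi_F)}$, proving the formula. The ``in particular'' clause is then immediate: since $\deg_p LG(L;p,q)=2(1-\chi(F))$ is already determined by the link and $\top(L,q)=q^{1-\chi(F)-2\lambda(\xi_F)}$ is the coefficient of the top power of $p$, reading off the exponent of $q$ in that coefficient recovers $\lambda(\xi_F)$.

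The main obstacle I anticipate is the additivity of the Hopf invariant under plumbing --- making sure the sign conventions for ``positive/negative plumbing'', for the Hopf invariant normalization, and for the base-case Hopf links are mutually consistent, so that the arithmetic $1-\chi-2\lambda$ genuinely matches on both sides for \emph{both} signs of Hopf band (and for deplumbing, which forces one to use multiplicativity of $\top$ and additivity of $\lambda$ in the reverse direction, requiring $\top(\partial A_\pm,q)$ to be a unit in $\Z[q^{\pm1}]$ --- which it is, being $q^{\mp1}$). Everything else is either a finite computation or a citation.
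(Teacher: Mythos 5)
Your proposal is correct and follows essentially the same route as the paper: Giroux--Goodman plus Theorem \ref{theorem: main2, plumbing multiplicativity}, the base computation $\top(\p A_{\pm 1},q)=q^{\pm 1}$, and the identity expressing the signed count of (de)plumbed Hopf bands as $1-\chi(F)-2\la(\xi_F)$ (which is exactly the additivity of $\chi$ and of the Hopf invariant that you invoke, cited to \cite{GG:fibred, LNV:fibred-sl2}). The only caveat is the sign convention you already flagged: with the paper's conventions the positive Hopf band contributes $q^{+1}$, not $q^{-1}$.
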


That the top coefficient of $LG(L;p,q)$ of a fibred link was a power of $q$ had been conjectured by Kohli in 2016 \cite{Kohli:LG} (without the explicit expression in terms of the Hopf invariant). %There, this is checked for knots up to 12 crossings. It is also noticed that among the 13 non-fibred 12-crossings knots whose fibredness cannot be detected with Alexander, there are still 7 of them with $\Z[q^{\pm 1}]$-monic Links-Gould invariant of maximal degree. 
Corollary \ref{Corollary: main fibred} is inspired by a similar theorem for Akutsu-Deguchi-Ohtsuki (ADO) invariants of fibred links \cite{LNV:fibred-sl2}. %Note that $ADO_p$ only detects the Hopf invariant $\la(\xi_F)$ mod $2p$, while Links-Gould detects $\la(\xi_F)$ completely. This is a slight advantage of being ``non-semisimple at generic $q$'' which is exclusive to quantum supergroups. 
We deduce Corollary \ref{Corollary: main fibred} from Theorem \ref{theorem: main2, plumbing multiplicativity} using a deep theorem of Giroux-Goodman \cite{GG:fibred}: any fiber surface in $S^3$ is obtained by plumbing/deplumbing unknotted annuli with a full $\pm 1$-twist. The proof of Theorem \ref{theorem: main2, plumbing multiplicativity} relies solely on the genus bound of \cite{KT:Links-Gould} and the cubic skein relation of the Links-Gould invariant \cite{Ishii:algebraic}. Note that the Kauffman polynomial (a ``semisimple'' invariant) also has a cubic skein relation, but lacks a genus bound, hence it cannot have properties as those of $LG(L;p,q)$ shown in the present paper. %It is the latter relation (which comes from $\End_{\Uqs}(V_{\a}^*\ot V_{\a})$ being 3-dimensional) that makes the present proof considerably simpler than that of \cite{LNV:fibred-sl2} and permits to prove a much more general theorem.  
\medskip

\subsection{Topological applications}
In light of the aforementioned theorem of Giroux-Goodman, we decided to study the class of links that bound surfaces obtained by plumbing/deplumbing unknotted annuli with an arbitrary non-zero\footnote{The non-zero twists condition guarantees that such surfaces are genus-minimizing \cite{Gabai:Murasugi1}.} number of full twists, which we call {\em annular} surfaces. If $A_n$ is an unknotted annulus with $-n$ full twists (see Figure \ref{fig.An}), we show that $\top(\p A_n,q)=a_n(q)$ where
$$a_n(q)=(2n-1)q+(2n-3)q^3+(2n-5)q^5+\dots+3q^{2n-3}+q^{2n-1}
$$ for $n> 0$ and $a_n(q)=a_{-n}(q^{-1})$ for $n<0$.
Together with Theorem \ref{theorem: main} this implies:

\begin{Corollary}
\label{Corollary: obstruction to bound ANNULAR surface}
    If $L$ bounds an annular surface then, up to a power of $q$, $\top(L,q)$ is a product of $a_n(q)$'s.
\end{Corollary}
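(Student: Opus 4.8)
The plan is to derive Corollary \ref{Corollary: obstruction to bound ANNULAR surface} as a direct consequence of Theorem \ref{theorem: main2, plumbing multiplicativity} together with the stated computation $\top(\p A_n,q)=a_n(q)$. Suppose $L$ bounds an annular surface $\S$, i.e.\ a surface obtained from the $A_n$'s by a finite sequence of plumbings and deplumbings. The first step is to set up bookkeeping for such a sequence: writing $\S$ as being built from pieces $A_{n_1},\dots,A_{n_k}$ via plumbing and deplumbing operations, I would argue by induction on $k$ that $\top(\p\S,q)$ is, up to a unit $\pm q^m$, a product of $a_{n_i}(q)$'s (or their inverses). The base case is a single twisted annulus, where $\top(\p A_n,q)=a_n(q)$ is exactly the stated formula.

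The inductive step splits into the two moves. For a plumbing $\S=\S_1*\S_2$ where the claim holds for $\S_1$ and $\S_2$, Theorem \ref{theorem: main2, plumbing multiplicativity} gives $\top(\p\S,q)=\top(\p\S_1,q)\cdot\top(\p\S_2,q)$, and multiplying two products of $a_n$'s (up to units) is again such a product. For a deplumbing, i.e.\ when $\S$ is such that $\S*A_n$ (for some previously constructed or auxiliary surface) yields something whose top coefficient is already understood, I would again invoke Theorem \ref{theorem: main2, plumbing multiplicativity} in the form $\top(\p(\S_1*\S_2),q)=\top(\p\S_1,q)\cdot\top(\p\S_2,q)$ and solve for the unknown factor; since $a_n(q)$ has leading and trailing coefficients $\pm 1$ (it is, up to a power of $q$, a monic integer Laurent polynomial with monic constant term), the quotient of the product-form by $a_n(q)$ remains, up to a unit, a product of $a_n$'s — here one uses that in $\Z[q^{\pm1}]$ the monic polynomials with monic constant coefficient form a multiplicatively closed set with the cancellation property needed, and in fact no genuine division is required because the Giroux--Goodman-style normal form lets us assume $\S$ is presented as a sequence of plumbings of the $A_{n_i}$ and their mirror surfaces $A_{-n_i}$, with $\top(\p A_{-n},q)=a_{-n}(q)=a_n(q^{-1})$, so the ``deplumbing'' is absorbed into the plumbing count.

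I expect the main obstacle to be purely expository rather than mathematical: making precise what ``obtained by plumbing/deplumbing'' means as an inductive structure and ensuring the induction is well-founded. The cleanest route is probably to observe that any surface obtained by a finite sequence of plumbings and deplumbings of the $A_n$'s can in fact be obtained by plumbings alone of the $A_n$'s (with $n$ ranging over $\Z\setminus\{0\}$, allowing both signs of twist), since a deplumbing of $A_n$ is essentially a plumbing of $A_{-n}$ undone, and one can reorganize the sequence; then the induction is a trivial count and the ``up to a power of $q$'' slack comes solely from Corollary \ref{theorem: main} / Theorem \ref{theorem: main2, plumbing multiplicativity} being stated only up to the ambiguity of the Seifert surface's Euler characteristic normalization. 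If instead deplumbing genuinely cannot be eliminated, the fallback is the division argument sketched above, whose only subtlety is checking that $a_n(q)$ is, up to $q^{\pm1}$-units, a polynomial whose leading and lowest coefficients are $\pm1$ — which is immediate from the displayed formula for $a_n(q)$.
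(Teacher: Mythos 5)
There is a genuine gap, and it is exactly at the point where deplumbing enters. Your ``cleanest route'' --- that any surface obtained by plumbing/deplumbing the $A_n$'s can be re-presented using plumbings alone, because ``a deplumbing of $A_n$ is essentially a plumbing of $A_{-n}$ undone'' --- is not a valid move. Deplumbing $A_m$ from $\S'$ means $\S'=\S\ast A_m$ for the surface $\S$ you actually care about; it does not exhibit $\S$ as a plumbing of mirror annuli onto anything, and there is no reorganization theorem eliminating deplumbings (if there were, the Giroux--Goodman statement itself, and the careful hypotheses on the $m_j$'s in Corollary \ref{corollary: plumbing-uniqueness for iterated hopf bands}, would be unnecessary). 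So after applying Theorem \ref{theorem: main2, plumbing multiplicativity} along the sequence of operations, all you get is the identity in $\Q(q)$
\begin{equation*}
\top(L,q)=\frac{a_{n_1}(q)\cdots a_{n_k}(q)}{a_{m_1}(q)\cdots a_{m_l}(q)},
\end{equation*}
and the whole content of the corollary is to show that the denominator genuinely cancels against factors of the numerator.

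Your fallback division argument does not close this. First, the factual claim it rests on is false: for $n>0$, $a_n(q)=(2n-1)q+\dots+q^{2n-1}$ has lowest coefficient $2n-1$, not $\pm1$, so $a_n$ is not (up to a power of $q$) monic at both ends, and in any case monicity is irrelevant here. Second, even knowing that $\top(L,q)\in\Z[q^{\pm1}]$, i.e. that $\prod_j a_{m_j}$ divides $\prod_i a_{n_i}$ in $\Z[q^{\pm1}]$, you cannot conclude that the quotient is again a product of $a_n$'s unless you know that each denominator factor matches a numerator factor; a priori $a_{m_j}$ could share irreducible factors with several different $a_{n_i}$'s and the quotient could fail to be of the required form. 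The paper's proof supplies exactly the missing ingredient: Lemma \ref{lemma: coprime an's}, proved via Kakeya's theorem on the location of roots, shows the normalized polynomials $d_n(q)$ are pairwise coprime, whence each $d_{m_j}$ in the denominator must occur among the $d_{n_i}$ and the quotient is, up to a power of $q$, a product of $a_n$'s. Without this coprimality (or an irreducibility statement playing the same role), your induction handles plumbing but not deplumbing, and the corollary does not follow.
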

%\textcolor{blue}{For which knots is it true that the free genus equals the genus? Fibred, positive, alternating links do. Note that $g_c\geq g_f\geq g$ where $g_c$ is the canonical genus, so $g_c=g$ implies $g_f=g$.}

%It is easy to tell whether $\top(L,q)$ is not a product of $a_n(q)$'s, see e.g. Proposition \ref{prop: top coeff. of annular links}. 
It seems particularly sensible to ask whether positive knots bound annular surfaces since {\em all} their minimal genus Seifert surfaces have free fundamental group by \cite{Ozawa:incompressible-positive} (note that $\pi_1(S^3\sm\S)$ is free for annular $\S$). We show that many of them do not bound annular surfaces with the above criterion, see Section \ref{section: computations} for positive knots up to twelve crossings. Since positive knots are strongly quasi-positive (SQP) \cite{Rudolp:positive-are-SQ}, this also shows that there is no obvious analogue of the Giroux-Goodman theorem for SQP links (see Section \ref{section: additional comments}). Note that our criterion gives no information at $q=1$, indeed, the Alexander polynomial seems to give no obstruction to this problem.

\medskip

We also obtain the following uniqueness result for links that bound annular surfaces.

\begin{Corollary}[Plumbing-uniqueness]
    
\label{corollary: plumbing-uniqueness for iterated hopf bands}

If a link bounds an annular surface obtained by plumbing the annuli $A_{n_1},\dots, A_{n_k}$ and a surface obtained by plumbing $A_{n'_1},\dots, A_{n'_r}$, then $k=r$ and, after reordering, $n_i=n'_i$ for all $i=1,\dots,k$. More generally, suppose $L$ is a link that bounds an annular surface $\S$ obtained by plumbing $A_{n_1},\dots,A_{n_k}$ and deplumbing $A_{m_1},\dots, A_{m_{\ell}}$: %(see Section \ref{subs: An's} for the definition of the $n$-twisted annulus $A_n$):
    \begin{enumerate}

  \item If $m_1,\dots,m_{i-1}\in \{\pm 1\}$ and $m_i,\dots,m_l\neq \pm 1$, then $m_i,\dots,m_l$ is a subsequence of $n_1,\dots,n_k$ (up to reordering).
     
    \item If $L$ bounds another annular surface $\S'$ that is obtained by plumbing the annuli $A_{n'_1},\dots,A_{n'_r}$  and deplumbing $A_{m'_1},\dots,A_{m'_s}$ where $m'_1,\dots,m'_{j-1}\in \{\pm 1\}$ and $m'_j,\dots,m'_s\neq \pm 1$, then the sequences $\{n_1,\dots,n_k\}\sm \{m_i,\dots,m_l\}$ and $\{n'_1,\dots,n'_r\}\sm \{m'_j,\dots,m'_s\}$ are equal up to reordering.

    %then $k=r, \ell=s$ and the sequence $n_1,\dots,n_k$ (resp. $m_{1},\dots,m_{\ell}$) equals $n'_1,\dots,n'_r$ (resp. $m'_{1},\dots,m'_s$) after reordering. \textcolor{blue}{NOT TRUE: can trivilly plumb and deplumb. This should read:  Suppose $\S$ is also obtained by plumbing the annuli $A_{n'_1},\dots,A_{n'_r}$  and deplumbing $A_{m'_1},\dots,A_{m'_s}$ with $m'_1,\dots,m'_{j-1}\in \{\pm 1\}$ and $m'_j,\dots,m'_s\notin \{\pm 1\}$. Then the sets $\{n_1,\dots,n_k\}\sm \{m_j,\dots,m_l\}$ and $\{n'_1,\dots,n'_r\}\sm \{m'_j,\dots,m'_s\}$ are equal.  }

    \end{enumerate}

\end{Corollary}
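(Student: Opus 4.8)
The plan is to reduce the corollary to the multiplicativity of $\top$ under plumbing (Theorem~\ref{theorem: main2, plumbing multiplicativity}), the computation $\top(\p A_n,q)=a_n(q)$, and the arithmetic of the ring $\Z[q^{\pm1}]$. Any surface obtained from a disk by plumbing and deplumbing unknotted non-zero-twist annuli is annular, hence genus-minimizing by Gabai--Murasugi, so $\top$ is a well-defined invariant of its boundary at every intermediate stage of such a construction. Thus, if $L=\p\S$ with $\S$ built by plumbing $A_{n_1},\dots,A_{n_k}$ and deplumbing $A_{m_1},\dots,A_{m_\ell}$ in some valid order, then telescoping Theorem~\ref{theorem: main2, plumbing multiplicativity} along the construction --- with $\top(\text{unknot},q)=1$, each plumbing step multiplying $\top$ by the $a_n(q)$ of the plumbed annulus and each deplumbing step dividing by the $a_m(q)$ of the deplumbed one --- gives the \emph{exact} identity
\[
\top(L,q)\cdot\prod_{j=1}^{\ell}a_{m_j}(q)=\prod_{i=1}^{k}a_{n_i}(q)
\]
in $\Z[q^{\pm1}]$; in particular $\prod_j a_{m_j}(q)$ divides $\prod_i a_{n_i}(q)$.

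Next, $\Z[q^{\pm1}]$ is a UFD whose unit group is $\{\pm q^{t}\}$. One has $a_{\pm1}(q)=q^{\pm1}$, a unit, while for $|n|\ge 2$ the element $a_n(q)$ is a non-unit: for $n\ge 2$ it equals $q\,c_n(q^2)$ with $c_n(x)=\sum_{k=0}^{n-1}(2n-1-2k)x^k$ monic of degree $n-1$, and $a_{-n}(q)=a_n(q^{-1})$. The crux of the argument is that for $|n|,|n'|\ge 2$ with $n\neq n'$ the polynomials $a_n(q)$ and $a_{n'}(q)$ are non-associate, and that each $a_n(q)$ with $|n|\ge 2$ is irreducible in $\Z[q^{\pm1}]$. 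Non-associateness is easy: the Laurent span of $a_n(q)$ equals $2|n|-2$, which separates $|n|$ from $|n'|$, and the top-degree coefficient of $a_{|n|}(q)$ is $1$ whereas that of $a_{-|n|}(q)$ is $2|n|-1$, which separates $n$ from $-n$. Irreducibility is the step I expect to be the main obstacle. A useful input is the Enestr\"om--Kakeya theorem: since the coefficients $2n-1>2n-3>\dots>1$ of $c_n$ are positive and strictly decreasing, all roots of $c_n$, hence all roots of $c_n(q^2)$, lie strictly outside the closed unit disk; as $c_n(q^2)$ is monic with constant term $2n-1$, this forces $a_n(q)$ to be irreducible at least whenever $2n-1$ is prime, because a monic proper factor in $\Z[q]$ with small constant term would have a root of absolute value $\le 1$. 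For composite $2n-1$ I would complete the argument by a finer Newton-polygon or substitution analysis of $g_n(x)\eq(1-x)^2c_n(x)=x^{n+1}+x^n-(2n+1)x+(2n-1)$; alternatively, one only needs that the prime-exponent vectors of the $a_n(q)$, $|n|\ge 2$, be $\Z$-linearly independent, which is strictly weaker.

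Granting this, the statements follow from unique factorization together with bookkeeping. For the first assertion (no deplumbing), $\top(L,q)=\prod_i a_{n_i}(q)=\prod_{i'} a_{n'_{i'}}(q)$ forces equality of the non-unit parts, hence $\{n_1,\dots,n_k\}$ and $\{n'_1,\dots,n'_r\}$ agree after deleting all entries equal to $\pm1$; matching the remaining power of $q$ gives $\#\{n_i=1\}-\#\{n_i=-1\}=\#\{n'_{i'}=1\}-\#\{n'_{i'}=-1\}$, while $\chi(\S)=\chi(\S')$ (both minimal genus) gives $k=r$, and these together pin down the $\pm1$-multiplicities, so the full multisets coincide. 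For (1): since $\top(L,q)\in\Z[q^{\pm1}]$ the unit factors cancel, so $\prod_{|m_j|\ge 2}a_{m_j}(q)$ divides $\prod_{|n_i|\ge 2}a_{n_i}(q)$, and by unique factorization (using that the relevant $a_n$ are irreducible and pairwise non-associate) the multiset $\{m_i,\dots,m_\ell\}$ of non-$\pm1$ deplumbed annuli is a sub-multiset of $\{n_1,\dots,n_k\}$. For (2): cross-multiplying the two expressions for $\top(L,q)$, matching non-unit parts, and using (1) for $\S$ and for $\S'$, yields equality of $\{n_1,\dots,n_k\}\sm\{m_i,\dots,m_\ell\}$ and $\{n'_1,\dots,n'_r\}\sm\{m'_j,\dots,m'_s\}$ on all entries of absolute value $\ge 2$; the $\pm1$-entries are then matched by comparing the exact power of $q$ in $\top(L,q)$ with the identity $\chi(\S)=\chi(\S')$ and the constraint that a deplumbed Hopf band must have been plumbed earlier in the construction. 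The only nonroutine ingredient is thus the arithmetic statement of the second paragraph.
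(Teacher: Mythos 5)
Your reduction to the identity $\top(L,q)\cdot\prod_j a_{m_j}(q)=\prod_i a_{n_i}(q)$ and the bookkeeping with units $a_{\pm1}(q)=q^{\pm1}$ follow the paper, but the arithmetic input you rely on is not established, and this is a genuine gap rather than a routine omission. You make irreducibility of $a_n(q)$ for $|n|\ge 2$ the crux, yet you only prove it when $2n-1$ is prime (via Enestr\"om--Kakeya and the constant term) and leave the composite case as a sketch (``a finer Newton-polygon or substitution analysis''). In the paper this irreducibility is explicitly an open conjecture, verified by computer in hundreds of cases but with no general proof, so your route as written does not close. Your fallback --- that it suffices for the prime-exponent vectors of the $a_n$ to be $\Z$-linearly independent --- is also unproven, and moreover it is not sufficient for assertion (1): divisibility $\prod_j a_{m_j}\mid\prod_i a_{n_i}$ can hold without the $m_j$'s forming a sub-multiset of the $n_i$'s if distinct $a_n$'s were allowed to share irreducible factors (e.g.\ if $a_2=\pi_1\pi_2$, $a_3=\pi_1\pi_3$, $a_4=\pi_2\pi_3$, then $a_2\mid a_3a_4$), so linear independence of exponent vectors does not rule out the failure mode you need to exclude.

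What actually suffices --- and what the paper proves --- is pairwise coprimality of the $d_n(q)=q^{\mp}a_{\pm n}$, which already forces sub-multiset containment by tracking each irreducible factor of a fixed $d_m$ through the product (no irreducibility needed). Your application of Enestr\"om--Kakeya does not deliver this: knowing that all roots of $c_n$ and of $c_{n'}$ lie outside the closed unit disk separates the positive-twist from the negative-twist annuli (as in the last step of the paper's Lemma on coprimality), but for two distinct positive indices $n>m\ge 2$ the root annuli overlap, so no coprimality follows. The paper's extra idea is to write $p_n(q)=f(q)+q^{n-m}p_m(q)$ with $f(q)=(2n-1)+(2n-3)q+\dots+(2m+1)q^{n-m-1}$, so a common root of $p_n,p_m$ would be a common root of $f,p_m$; Kakeya then gives $R_{max}(f)=\frac{2m+3}{2m+1}<\frac{2m-1}{2m-3}=R_{min}(p_m)$, excluding such a root. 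To repair your proof, replace the irreducibility step by this coprimality argument (or supply a complete proof of irreducibility, which would be a new result beyond the paper).
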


This result follows from Theorem \ref{theorem: main} and the fact that the $a_n(q), n\neq 0$ are pairwise coprime (up to a power of $q$), a non-obvious fact which we prove using an old theorem of Kakeya \cite{Kakeya:roots}. Note that when $n_1,\dots,n_k$ are pairwise coprime, the Alexander polynomial criterion (i.e. our main theorem at $q=1$) is enough to deduce this result. Corollary \ref{corollary: plumbing-uniqueness for iterated hopf bands} can be restated by saying that the links $\p A_n,n\neq 0$ are linearly independent in a certain Grothendieck group, see Remark \ref{remark: Grothendieck group}.
\medskip

The above uniqueness can be observed for certain classes of links for which all minimal genus Seifert surfaces are known. For instance, in \cite{HT:incompressible-surfaces}, a list of all minimal genus Seifert surfaces of a 2-bridge knot is given. One can observe that all of them are obtained by doing different plumbings over the same collection of annuli. This even holds for the larger class of special arborescent links, as proved in \cite{Sakuma:minimal-genus}. 
%\textcolor{blue}{Can we prove that our class is strictly bigger than special alternating?}
\medskip

\begin{comment}
    
For special arborescent links, i.e. links that are boundaries of surfaces obtained by plumbing $A_n$'s according to a planar tree, it is known that all other minimal genus Seifert surfaces are also obtained by plumbing the same set of annuli \cite{Sakuma:minimal-genus}. This matches with our theorem.

\end{comment}
\begin{comment}

\textcolor{blue}{WAIT A MINUTE: we need a good example! What if links that are boundary of annular surfaces actually have a UNIQUE Seifert surface? See the Corollary after Prop. 1 in Hatcher-Thurston. It seems, there are many Seifert surfaces (but even some non-fibred 2-bridge knots might have unique SS), we should be able to see our plumbing-uniqueness result for 2-bridge knots in the HT paper though.}

\textcolor{blue}{Maybe we can check our theorem on Montesinos knots?}

\textcolor{blue}{Maybe the plumbing-uniqueness is known for special arborescent links? But ours is much more general: plumbing along any arc is allowed, and even deplumbing is possible.}

\textbf{Note: the Seifert form can be used to prove that two SS's are non-isotopic. But our plumbing-uniqueness shows more, since it is for the link that is the boundary.}
    
\end{comment}

  %Indeed, we show that the $\top(\p A_n,q)\in\Z[q^{\pm 1}]$ are coprime to each other {\em for all $n\in\Z$} (up to a power of $q$), a non-obvious fact which we prove using an old theorem of Kakeya \cite{Kakeya:roots} (see Lemma \ref{lemma: coprime an's}).
\medskip

Note that (the hat version of) the top knot Floer homology group also has a plumbing-multiplicativity property \cite{CHS:Murasugi-sum-extremal-Floer}. The Floer homology result gives a criterion to obstruct a link to belong to the class of links obtained as Murasugi sums of alternating links. However, for any link $L$ in this class (e.g. alternating or positive links), the main result of \cite{CHS:Murasugi-sum-extremal-Floer} cannot obstruct $L$ from being a plumbing of annuli, as in Corollary \ref{Corollary: obstruction to bound ANNULAR surface}. This is because of the well-known fact that knot Floer homology of alternating links is determined by the Alexander polynomial and the signature \cite{OS:alternating-knots}. It is even less clear whether Corollary \ref{corollary: plumbing-uniqueness for iterated hopf bands} can be obtained with Floer homology techniques.

\medskip

We expect Theorem \ref{theorem: main} to generalize to Murasugi sum (as is the case for the Alexander polynomial and knot Floer homology). We also expect similar results to hold for other non-semisimple quantum invariants, such as those of Akutsu-Deguchi-Ohtsuki (ADO) \cite{ADO}. However, in that case, the top coefficient would be an element of a cyclotomic ring instead of a Laurent polynomial ring. This seems to be a slight disadvantage, for instance, the $p$-th ADO invariant of a fibred link only recovers the Hopf invariant of $\xi_F$ mod $2p$ \cite{LNV:fibred-sl2}. In this respect, quantum supergroups seem to provide neater results than the usual quantum groups.

\noindent {\bf Plan of the paper.} In Section \ref{section: plumbing} we define plumbing and twisted annuli. In Section \ref{section: Links-Gould} we briefly recall the definition and some properties of the Links-Gould invariant, and we (slightly) generalize the main result of \cite{KT:Links-Gould}. In Section \ref{section: proof of plumbing multiplicativity} we prove Theorem \ref{theorem: main} and Corollary \ref{Corollary: main fibred}. In Section \ref{section: proof of topological applications} we prove our corollaries for links that bound annular surfaces. In Section \ref{section: computations} we list some computations of $\top(L,q)$. Finally, in Section \ref{section: additional comments} we give a discussion on strongly quasi-positive links that motivated the corollaries above on annular surfaces.
\medskip

\noindent {\bf Acknowledgements.} We thank Ben-Michael Kohli, Chuck Livingston, Hector Peña Pollastri for interesting conversations, Matthew Hedden for some comments on their work \cite{CHS:Murasugi-sum-extremal-Floer} and Sebastian Baader for answering our questions on strongly quasi-positive links. We thank DeepSeek AI for suggesting Kakeya's theorem to prove Lemma \ref{lemma: coprime an's} under the prompt ``What can be said about the distribution of the roots of the polynomials $\top(\p A_n,q)$''. This unblocked two months of unsuccessful attempts at showing the coprimality of these polynomials. 
 
\begin{comment}
   
\begin{center}
    \begin{tabular}{|c|c|c|c|c|c|c|c|c|c|c|c|c|c|c|}
    \hline
       Knot & $3_1$ & $4_1$ & $5_1$ & $5_2$ & $6_1$ & $6_2$ & $6_3$ & $7_1$ & $7_2$ & $7_3$ & $7_4$ & $7_5$ & $7_6$ & $7_7$  \\
        \hline
        Genus &1&1&2&1&1&2&2&3&1&2&1&2&2
&2\\
\hline
        Fibred & Y &Y &Y&N&N&Y&Y&Y&N&N&N&N&Y&Y\\
        \hline
        Top term & $q^2p^2$& $p^2$ &$q^4p^4$ &$q^2a_2p^2$ & $a_2p^2$ & $q^2p^4$ & $p^4$ & $q^6p^6$ & $q^2a_3p^2$ & $q^4a_2p^4$ & $q^2a_2^2p^2$ & $q^4a_2p^4$ & $q^2p^4$ & $p^4$\\
         \hline
         \hline
       &  $8_1$ &   $8_2$ &  $8_3$ &  $8_4$ &  $8_5$ &  $8_6$ &  $8_7$ &  $8_8$ &  $8_9$ &  $8_{10}$ &  $8_{11}$ &  $8_{12}$ &  $8_{13}$ &  $8_{14}$  \\
         \hline
         &1&3&1&2&3&2&3&2&3&3&2&2&2&2\\
         \hline
  &N&Y&N&N&Y&N&Y&N&Y&Y&N&Y&N&N\\
         \hline
      &   $a_2$ &  $q^4$ & $a_2\overline{a_2}$ & $q^2\overline{a_2}$ & $q^4$ & $q^2a_2$ & $q^2$ & $a_2$ & $1$ & $q^2$ & $q^2a_2$ & $1$ & $a_2$ & $q^2a_2$ \\
         \hline
    \end{tabular}
\end{center}

\begin{notation} We will use the following notation
\begin{itemize}
    \item  We denote $f\mapsto\overline{f}$ the involution of $\Z[q^{\pm 1}]$ that sends $q$ to $q^{-1}$

\end{itemize}

\end{notation}

\end{comment}

\section{Plumbing surfaces} 
\label{section: plumbing}

Let $\S,\S'\sb S^3$ be two compact, oriented, embedded surfaces. We suppose there exists an open 3-ball $B\sb S^3$ such that $\S'\sb B$ and $\S\sb S^3\sm \ov{B}$. Let $\s\sb \S$ and $\s'\sb \S'$ be two properly embedded arcs, one on each surface. Then we can form a new compact oriented surface $\S_p$ by gluing a square neighborhood of $\s$ in $\S$ to a square neighborhood of $\s'$ in $\S'$ in a unique way such that the arcs become perpendicular after gluing (see Figure \ref{fig.Plumbing}). We further suppose that this gluing happens in the boundary of the $3$-ball that separates the two surfaces in such a way that the orientations of the two surfaces extend to an orientation of $\S_p$. We say that $\S_p$ is obtained by {\em plumbing} the surfaces $\S$ and $\S'$. Connected sum of knots can be realized as a particularly simple plumbing of their Seifert surfaces but much more interesting plumbings are possible. The opposite operation will be referred to as \emph{deplumbing}.

\begin{figure}[htp!]
\begin{center}
\includegraphics[width=10cm]{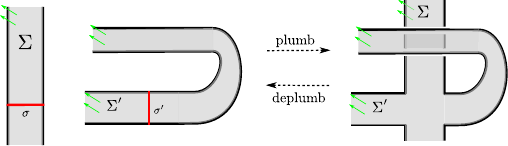}    
\end{center}
\caption{Plumbing surfaces $\S$ and $\S'$ along arcs $\s,\s'$.}
\label{fig.Plumbing}
\end{figure}

\subsection{Twisted unknotted annuli}
\label{subs: An's} For any $n\in\Z\sm \{0\}$, let $A_n$ be an unknotted annulus embedded in $S^3$ with $-n$ full twists (a positive twist is right-handed and a negative one is left-handed), see Figure \ref{fig.An}. The choice of sign is so that $\p A_n=H_n$ as oriented links, where $H_n$ is the oriented $(2,n)$-torus link whose components have linking number $n$. We call $A_n$ a {\em twisted annulus}, and $A_{\pm 1}$ a {\em Hopf band} (positive or negative). It is well-known that $H_n$ is a fibered link with fiber $A_n$ if and only if $n=\pm 1$. 
\medskip

\begin{figure}[htp!]
\begin{center}
\includegraphics[width=7cm]{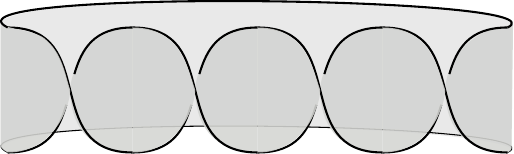}    
\end{center}
\caption{The unknotted twisted annulus $A_{-2}$.}
\label{fig.An}
\end{figure}

\begin{comment}
    
\subsection{Plumbing and deplumbing annuli} Here are some classes of knots that can be obtained with the above operations:
\begin{enumerate}
    \item All fibred links in $S^3$ can be obtained by plumbing/deplumbing Hopf bands $A_{\pm 1}$. This is a theorem of Giroux-Goodman \cite{GG:fibred}.
    \item Arborescent links are, by definition, plumbings of bands with half-twists, where the plumbing is specified by a tree \cite{Gabai:arborescent}. The arborescent surface is orientable precisely when it is a plumbing of annuli $A_n$. 
\end{enumerate}

\end{comment}

\section{Links-Gould invariants}

\label{section: Links-Gould}

\subsection{The Links-Gould invariant} 
\label{subs: Links-Gould invariant}
We assume the reader is familiar with the Reshetikhin-Turaev invariants of $\CC$-colored, framed, oriented tangles, where $\CC$ is a ribbon tensor category \cite{RT1, Turaev:BOOK1}. To define the Links-Gould invariant, we let $\CC$ be the category of weight super-representations of the quantum supergroup $\Uqs$ where $q$ is generic (see \cite{AGP:relative-modular-superalgebras} and \cite{dWLK:On-Links-Gould} for more details on $\Uqs$). This is a ribbon tensor category \cite{Khoroshkin-Tolstoi:universal-R-matrix-superalgebras, Yamane:quantized-superalgebras-R-matrices}. The only fact we will need from the representation theory of $\Uqs$ is that there exists a family of (4-dimensional) simple highest weight modules $V_{\a}$ in $\CC$ with highest weight $\a\in \C$ satisfying the following properties\footnote{This family is a quantized version of a similar one-parameter family of simple modules of classical $\gl(2|1)$, see Kac \cite{Kac:Lie-superalgebras}.}:
\begin{enumerate}
    \item[(a)] The braiding and pivotal structure of $\CC$ act on $V_{\a}$ with coefficients (in a preferred PBW-type basis of $V_{\a}$) that are Laurent polynomials in $q$ and $q^{\a}$ independent of $\a$ and with integer coefficients. In other words, each such coefficient has the form $f(q,q^{\a})$ for some polynomial $f(q,t)\in \Z[q^{\pm 1}, t^{\pm 1}]$ independent of $\a$. See \cite[Section 4]{dWLK:On-Links-Gould} or \cite{Ishii:LG-as-generalization} for the formula of the braiding.
   \item[(b)] The ribbon element acts as $1$ on $V_{\a}$.\footnote{This is why none of the works \cite{dWLK:On-Links-Gould, Ishii:LG-as-generalization, Ishii:algebraic} include a framing normalization in the definition of $LG(L;p,q)$. We checked this in Mathematica using the formula of the $R$-matrix of \cite{dWLK:On-Links-Gould}.}
    \item[(c)] The quantum dimension of each $V_{\a}$ is zero.
    
     \item[(d)] $\End_{\CC}(V_{\a}\ot V_{\a})$ is 3-dimensional, where $\End_{\CC}$ denotes endomorphisms of $\Uqs$-modules.\footnote{This follows from the fact that $V_{\a}\ot V_{\a}$ decomposes as a sum of three pairwise non-isomorphic simple modules, see \cite[Section 4.6.1]{deWit:thesis}.}
 \end{enumerate}
 \medskip

We denote by $RT_{V_{\a}}(T)$ the Reshetikhin-Turaev invariant of a tangle $T$ in which all components are colored with $V_{\a}$. Note that, by $(b)$, $RT_{V_{\a}}(T)$ is actually an invariant of unframed oriented tangles, hence from now on we only consider unframed oriented tangles and links. If $T$ is an oriented $(1,1)$-tangle, since $V_{\a}$ is simple $RT_{V_{\a}}(T):V_{\a}\to V_{\a}$ has the form $c_T\cdot \id_{V_{\a}}$ and by $(a)$ $c_T$ has the form $f_T(q,q^{\a})$ for some polynomial $f(q,t)\in \Z[q^{\pm 1}, t^{\pm 1}]$. It turns out that $c_T$ is a Laurent polynomial in $q$ and $q^{2\a+1}$ (see \cite{Ishii:LG-as-generalization}, in a slightly different convention for the variables). From now on, we set $p:=q^{2\a+1}$ and consider $c_T$ as an element of $\Z[q^{\pm 1}, p^{\pm 1}]$.
\medskip

Since the quantum dimension vanishes by property (c) above we define the Links-Gould invariant on oriented links $L$ by choosing a $(1,1)$-tangle $L_o$ whose closure is $L$. It is known that $c_{L_o}\in\Z[q^{\pm 1},p^{\pm 1}]$ is independent of the $(1,1)$-tangle $L_o$ chosen \cite{AGP:relative-modular-superalgebras}, hence $$LG(L;p,q):=c_{L_o}$$
is a link invariant. This is the Links-Gould invariant\footnote{In many references \cite{Ishii:algebraic, dWLK:On-Links-Gould}, the Links-Gould invariant is a polynomial $LG(L;t_0,t_1)$. The invariant of the present paper is obtained by setting $t_0=p^{-1}q, t_1=pq$. Also, some of these works use the variable $p=q^{\a+\frac{1}{2}}$ but we prefer to set $p=q^{2\a+1}$.}. 
\medskip

\subsection{Basic properties of LG} The Links-Gould invariant has the following simple properties:
\begin{itemize}
    \item It is symmetric under $p\mapsto p^{- 1}$ and thus can be written as
\begin{align}
    \label{eq: LG symmetric version in p}
    LG(L;p,q)=\sum_{k=0}^Nf_k(q)(p^{k}+p^{-k})
\end{align}
for some $N\geq 0$ and $f_k(q)\in\Z[q^{\pm 1}]$, see \cite{dWLK:On-Links-Gould}.
\medskip

\item If $\ov{L}$ is the mirror image of $L$, then $LG(\ov{L};p,q)=\ov{LG(L;p,q)}$ where $f\mapsto \ov{f}$ is the unique $\Z$-linear involution of $\Z[q^{\pm 1},p^{\pm 1}]$ that maps $q\mapsto q^{-1}, p\mapsto p^{-1}$ \cite{dWLK:On-Links-Gould}.
\medskip

\item   At $q=1$, $LG(L;p,q)$ recovers the Alexander polynomial by $$LG(L;p,1)=\De_L(p)^2,$$
   and at $q=-1$ one has $$LG(L;-p,-1)=\De_L(p)^2,$$
   see \cite{Kohli:LG-square-Alexander}. Moreover, at $q=i$  $$LG(L;ip,i)=\De_L(p^2),$$
    see \cite{Ishii:LG-as-generalization}.
\end{itemize}

The following sections deal with more elaborate properties of the Links-Gould invariant.

\subsection{Skein relations} In this section we explore the consequences of property $(d)$ above. In what follows, all our tangle diagrams are assumed to be colored with $V_{\a}$ and they are read from bottom to top.

\begin{lemma}
\label{lemma: 3-dimensional End and basis}
 We have $\dim\End_{\CC}(V_{\a}^*\ot V_{\a})=3$. Moreover, the following $V_{\a}$-colored tangles 
    
\begin{figure}[H]
\begin{align*}
%\label{fig: Ishii skein1}
\begin{matrix}
\includegraphics[width=1.6cm]{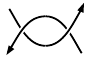}
\end{matrix}, \hspace{1.5cm}
\begin{matrix}
\includegraphics[width=1cm]{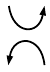}
\end{matrix}, \hspace{1.5cm}
\begin{matrix}
\includegraphics[width=1cm]{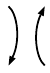}
\end{matrix}
\end{align*}
\end{figure}
form a basis of $\End_{\CC}(V_{\a}^*\ot V_{\a})$.
\end{lemma}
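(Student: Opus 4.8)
The plan is to deduce both assertions from the four properties of $V_{\a}$ listed above—in particular from property $(d)$, which says $\dim\End_{\CC}(V_{\a}\ot V_{\a})=3$—together with the duality morphisms of the ribbon category $\CC$. First I would establish the dimension count $\dim\End_{\CC}(V_{\a}^*\ot V_{\a})=3$. The natural isomorphism $V_{\a}^*\ot V_{\a}\cong (V_{\a}\ot V_{\a}^*)^*$ together with the general fact that for modules over a Hopf algebra $\Hom(X,Y)\cong\Hom(\unit,X^*\ot Y)$ gives $\End_{\CC}(V_{\a}^*\ot V_{\a})\cong \Hom_{\CC}(\unit, V_{\a}\ot V_{\a}^*\ot V_{\a}^*\ot V_{\a})$. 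A cleaner route: the braiding of $\CC$ provides an isomorphism of $\Uqs$-modules $V_{\a}\ot V_{\a}\xrightarrow{\sim}V_{\a}\ot V_{\a}$, and more usefully, since $V_{\a}$ is simple with a nondegenerate evaluation pairing, one has $\End_{\CC}(V_{\a}^*\ot V_{\a})\cong \End_{\CC}(V_{\a}\ot V_{\a}^{**})$ and $V_{\a}^{**}\cong V_{\a}$ canonically via the pivotal structure (property $(a)$ includes the pivotal structure), so $\End_{\CC}(V_{\a}^*\ot V_{\a})\cong\End_{\CC}(V_{\a}\ot V_{\a})$, which is $3$-dimensional by $(d)$. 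I would write this last chain of isomorphisms carefully, since it is the crux of the dimension statement.

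Next I would show the three displayed tangles are linearly independent, hence (by the dimension count) a basis. The three tangles are: the identity on $V_{\a}^*\ot V_{\a}$ (the two parallel strands, oppositely oriented—this is \texttt{LGpdf4.pdf}), the composite $\lcoev\circ\lev$ or $\rcoev\circ\rev$ (a "cup-cap" that factors through $\unit$, i.e. \texttt{LGpdf2.pdf}), and presumably a composite involving the braiding, such as the morphism obtained by turning a single crossing of $V_{\a}\ot V_{\a}$ into an endomorphism of $V_{\a}^*\ot V_{\a}$ via partial duality (\texttt{LGpdf3.pdf}). To prove independence I would transport the question across the isomorphism $\End_{\CC}(V_{\a}^*\ot V_{\a})\cong\End_{\CC}(V_{\a}\ot V_{\a})$ from the previous paragraph; under this isomorphism the identity maps to the identity, the cup-cap maps to a nonzero multiple of the projector onto the trivial-type summand (or to the morphism $\unit\to V_{\a}\ot V_{\a}\to\unit$ reassembled), and the braiding-tangle maps to the braiding $c_{V_{\a},V_{\a}}$ itself. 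Since $V_{\a}\ot V_{\a}$ decomposes as a sum of three pairwise non-isomorphic simples (the footnote to $(d)$, citing \cite[Section 4.6.1]{deWit:thesis}), $\End_{\CC}(V_{\a}\ot V_{\a})$ is commutative, and $\{\id, c_{V_{\a},V_{\a}}, c_{V_{\a},V_{\a}}^2\}$ or $\{\id, P, c\}$ (for $P$ a rank-one projector) is linearly independent precisely when the braiding has three distinct eigenvalues on the three summands. The eigenvalues of $c_{V_{\a},V_{\a}}$ are read off from the ribbon/twist element acting on each simple summand of $V_{\a}\ot V_{\a}$, and these are known to be distinct from the explicit $R$-matrix of \cite{dWLK:On-Links-Gould, Ishii:algebraic} (this is exactly what underlies the cubic skein relation of \cite{Ishii:algebraic}). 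I would cite that computation rather than redo it.

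The main obstacle, I expect, is making the identification of the three pictured tangles with concrete elements of $\End_{\CC}(V_{\a}^*\ot V_{\a})$ unambiguous and then verifying that their images under the duality isomorphism are genuinely the three independent morphisms of $\End_{\CC}(V_{\a}\ot V_{\a})$—i.e. checking that no picture collapses to a scalar multiple of another under the various zigzag (snake) identities, and that the braiding-picture does not degenerate. One has to be a little careful because the quantum dimension of $V_{\a}$ is zero (property $(c)$), so certain closed loops vanish; this means, e.g., that the "wrong" way of closing the cup-cap tangle would give $0$, and one must make sure the chosen picture \texttt{LGpdf2.pdf} is the one that does \emph{not} vanish. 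Concretely I would pin down each tangle by computing its value in the $4$-dimensional representation $V_{\a}$ explicitly (a finite matrix computation, using the $R$-matrix and pivot of \cite{dWLK:On-Links-Gould}) and exhibit $3$ linearly independent matrices; this sidesteps all categorical subtleties at the cost of a short explicit computation, and matches how the analogous statement is handled in \cite{Ishii:algebraic}. Either route works; I would present the categorical argument as the main line and remark that it can be confirmed by the matrix computation.
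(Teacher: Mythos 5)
Your first step --- identifying $\End_{\CC}(V_{\a}^*\ot V_{\a})$ with $\End_{\CC}(V_{\a}\ot V_{\a})$ via the duality/ribbon structure and invoking property $(d)$ --- is exactly how the paper gets $\dim\End_{\CC}(V_{\a}^*\ot V_{\a})=3$. The divergence, and the gap, is in the linear-independence step. The bending isomorphism you want to use is only an isomorphism of vector spaces, not of algebras, so "the identity maps to the identity" is not available: under the rotation the paper itself uses later, the horizontal double braiding goes to the full twist $c_{V_{\a},V_{\a}}^{2}$ (not to $c_{V_{\a},V_{\a}}$), the cup-cap goes to the identity, and the identity goes to a cup-cap-type morphism. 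More seriously, your key identification "the cup-cap maps to a nonzero multiple of the projector onto the trivial-type summand" cannot be correct: $V_{\a}\ot V_{\a}$ has no trivial summand (its three simple summands are the nontrivial ones of de Wit's decomposition), and in $V_{\a}^*\ot V_{\a}$, where the trivial object does occur, it cannot split off precisely because $\dim_q V_{\a}=0$ --- there the cup-cap is nilpotent (its square is $\dim_q V_{\a}$ times itself, i.e.\ zero), not an idempotent. Consequently the images of the three tangles in $\End_{\CC}(V_{\a}\ot V_{\a})\cong\kk^3$ are not $\{\id,c,c^2\}$, and the clean Vandermonde/distinct-eigenvalue criterion you invoke does not apply as stated; to run that argument you would need to determine, in addition to the distinctness of the braiding eigenvalues, exactly how the bent identity distributes over the three idempotents, which is extra information of the same order as the explicit $R$-matrix computation. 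Your fallback --- computing the three morphisms as explicit matrices on the $16$-dimensional space --- would indeed settle independence, but then the categorical argument is doing no work.

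The paper avoids all of this with a lighter argument: assume $aT_1+bT_2+cT_3=0$ and evaluate the relation in three ways. Closing the two left strands (a left partial trace) and using that the twist is $1$ and $\dim_q V_{\a}=0$ gives $a+b=0$; closing the two top strands gives $a\,LG(H_+)+c=0$; composing first with a negative horizontal double braiding and then closing the top strands gives $b\,LG(H_-)+c=0$. Since $LG(H_+)\neq -LG(H_-)$, the only solution is $a=b=c=0$, and the dimension count from the first step upgrades independence to a basis. This uses nothing beyond properties $(b)$, $(c)$ and the known values of the Links--Gould invariant of the two Hopf links, so no eigenvalue analysis or summand bookkeeping is needed; if you want to keep your approach, you must either supply the missing identification of the images under bending or fall back on the explicit matrix computation.
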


\begin{proof}
The first assertion follows since $\End_{\CC}(V_{\a}^*\ot V_{\a})$ is isomorphic to $\End_{\CC}(V_{\a}\ot V_{\a})$ (just reverse the downward arrows using the ribbon structure of $\CC$). Let $T_1,T_2,T_3$ be the above tangles, from left to right, and suppose there is a linear dependence relation of the form 
\begin{align}
        aT_1+bT_2+cT_3=0.
        \label{eq: linear independence of 3 tangles}
\end{align}
Then, closing the two leftmost strands (i.e. applying a left partial trace) and using properties $(b),(c)$ of Subsection \ref{subs: Links-Gould invariant}, we obtain $(a+b)\id_{V_{\a}}=0$ so $a+b=0$. Similarly, closing the two top strands we obtain $aLG(H_+)+c=0$ where $H_+$ is the positive Hopf link. Now, composing (\ref{eq: linear independence of 3 tangles}) with a negative horizontal double braiding on the left and closing the two top strands again gives the equation $bLG(H_-)+c=0$, where $H_-$ is the negative Hopf link. Since $LG(H_+)\neq -LG(H_-)$ it is easy to see that these three equations have the unique solution $a=b=c=0$. Since $\End_{\CC}(V_{\a}^*\ot V_{\a})$ is 3-dimensional, it follows that the set $\{T_1,T_2,T_3\}$ is a basis.
\end{proof}

Sometimes we will need the explicit coefficients of a morphism in this basis. In
\cite{Ishii:algebraic} the following explicit expression is obtained:

%\textcolor{blue}{Case $n<0$?}

\begin{lemma}[\cite{Ishii:algebraic}]
    \label{lemma: Ishii's skein}
    For every $n>0$ define $B_n\in\Z[q^{\pm 1}]$ and $C_n,D_n\in\Z[p^{\pm 1},q^{\pm 1}]$ by
    \begin{align*}
    B_n&=\frac{q^{2n}-1}{q^2-1}, &  C_n&=2nR-B_n((q^2+1)R+1), & D_n&=(q^2-1)R\cdot B_n+1,
\end{align*}
where $R=\frac{-q(p+p^{-1})+(q^2+1)}{q^2-1}$. Then

\begin{figure}[H]
\begin{align*}
%\label{fig: Ishii skein2}
\begin{matrix}
\includegraphics[width=0.7cm]{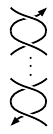}
\end{matrix} \ \Bigg\} \text{ $n$ full twists} 
=B_n \begin{matrix}
\includegraphics[width=1.6cm]{LGpdf4.pdf}
\end{matrix}
+C_n\begin{matrix}
\includegraphics[width=1cm]{LGpdf2.pdf}
\end{matrix}   
 +D_n\begin{matrix}
\includegraphics[width=1cm]{LGpdf3.pdf}
\end{matrix}.
\end{align*}
\end{figure}

\end{lemma}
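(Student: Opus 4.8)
The plan is to reduce the whole statement to understanding the single endomorphism ``one full twist''. Let $\Phi\eq c_{V_\a,V_\a^*}\circ c_{V_\a^*,V_\a}\in\End_\CC(V_\a^*\ot V_\a)$ be the double braiding of the two anti-parallel strands, so that the $n$-full-twists tangle represents $\Phi^n$; by the ribbon axiom together with property $(b)$ of Subsection \ref{subs: Links-Gould invariant} one has $\Phi=\theta_{V_\a^*\ot V_\a}$. By Lemma \ref{lemma: 3-dimensional End and basis}, $A\eq\End_\CC(V_\a^*\ot V_\a)$ is $3$-dimensional with basis $\{T_1,T_2,T_3\}$ (with $T_1$ the ``turnback'' and $T_3$ the identity, as forced by the $n=0$ case $B_0=C_0=0$, $D_0=1$), so $\Phi^n=B_nT_1+C_nT_2+D_nT_3$ for unique scalars $B_n,C_n,D_n$, and identifying these scalars is the whole task.

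First I would determine the minimal polynomial of $\Phi$. Since $\dim A=3$, $\Phi$ satisfies a monic cubic $\Phi^3=a\Phi^2+b\Phi+c\cdot\id$, and the crucial feature is that this cubic has a \emph{repeated root}; in fact one computes $(\Phi-q^2)(\Phi-\id)^2=0$. The presence of a double root is structurally forced by non-semisimplicity: because $\mathrm{qdim}(V_\a)=0$ (property $(c)$), the turnback $T_1$ squares to $\mathrm{qdim}(V_\a)\cdot T_1=0$, so $A\cong\KK\times\KK[\e]/(\e^2)$ with $T_1$ spanning the nilpotent part; dually $V_\a^*\ot V_\a$ carries a non-split self-extension of $\unit$, and $\Phi=\theta_{V_\a^*\ot V_\a}$, being the identity on the socle and head of that block ($\theta_\unit=\id$), is unipotent there while acting by the scalar $q^2$ on the complementary typical summand. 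The eigenvalue $q^2$ and the size of the nilpotent part I would read off from the $R$-matrix of \cite{dWLK:On-Links-Gould}. Consequently $\Phi^n$ satisfies the linear recursion $v_{n+3}=a v_{n+2}+b v_{n+1}+c v_n$, whose general solution is $v_n=\alpha\,q^{2n}+\beta+\gamma\,n$ for fixed $\alpha,\beta,\gamma\in A$ (the double root $1$ producing the $\beta+\gamma n$), so the triples $(B_n,C_n,D_n)$ have exactly this shape; the linear-in-$n$ term is what becomes the $2nR$ in $C_n$.

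It then remains to evaluate the constants, i.e. to expand $\id,\Phi,\Phi^2$ in the basis $\{T_1,T_2,T_3\}$: one has $\Phi^0=\id=T_3$, and the coefficients of $\Phi,\Phi^2$ are obtained by the closure argument from the proof of Lemma \ref{lemma: 3-dimensional End and basis} --- the left partial trace and the two ``trace'' closures of $\Phi^m=B_mT_1+C_mT_2+D_mT_3$ give three $\Z[q^{\pm1},p^{\pm1}]$-linear equations whose right-hand sides are $RT_{V_\a}$ of an unknot ($=1$) and of the torus links obtained by closing $\Phi^m$, the latter being computed easily from the eigenvalues of $\Phi$ (or directly from the $R$-matrix). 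Matching these with the ansatz $v_n=\alpha q^{2n}+\beta+\gamma n$ yields $B_n,C_n,D_n$ in the stated closed form; that $B_n$ satisfies the $p$-free recursion $(x-q^2)(x-1)^2=0$ with the $p$-free initial data $B_0=0,B_1=1,B_2=q^2+1$ explains why $B_n\in\Z[q^{\pm1}]$, the $p$-dependence of $C_n$ and $D_n$ entering only through the turnback and the (co)evaluations, which see the weight $\a$. The main obstacle is exactly this explicit input: pinning down $(\Phi-q^2)(\Phi-\id)^2=0$ (equivalently, the non-semisimple structure of $V_\a^*\ot V_\a$ and the eigenvalue $q^2$) together with the low-order torus links, which is the $R$-matrix bookkeeping that produces the rational function $R=\tfrac{-q(p+p^{-1})+(q^2+1)}{q^2-1}$ and where all the sign and normalization conventions must be tracked.
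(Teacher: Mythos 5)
The paper never proves this lemma: it is imported wholesale from Ishii \cite{Ishii:algebraic}, the only hint being the remark that the $n=1$ case ``directly follows from the $R$-matrix having a cubic minimal polynomial''. Your outline essentially reconstructs that route, applied directly to the antiparallel double braiding $\Phi=\theta_{V_\a^*\ot V_\a}$: an annihilating cubic with simple root $q^2$ and double root $1$ forces $\Phi^n=\alpha q^{2n}+\beta+\gamma n$ in the three-dimensional algebra $\End_{\CC}(V_\a^*\ot V_\a)$, and the constants are then pinned down by the same closure bookkeeping used in the proofs of Lemma \ref{lemma: 3-dimensional End and basis} and Lemma \ref{lemma: cut F along alpha, express in basis}. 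This is consistent with the stated answer ($B_n,C_n,D_n$ are precisely $\Z[q^{\pm1},p^{\pm1}]$-combinations of $q^{2n}$, $1$ and $n$), so the skeleton is sound and arguably more conceptual than a brute-force computation.

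As written, though, it is a plan rather than a proof: the two decisive inputs --- the identity $(\Phi-q^2)(\Phi-\id)^2=0$ (in particular the eigenvalue $q^2$), and the explicit values of the low-order closures such as $LG(H_\pm)$ and $LG(H_2)$, which is exactly where $R=\frac{-q(p+p^{-1})+(q^2+1)}{q^2-1}$ comes from --- are deferred to ``reading off the $R$-matrix'' and never carried out, and those computations are the entire content of the lemma. Two smaller slips: (i) you mislabel the basis: in the statement the turnback (cap--cup) is the middle tangle, carrying $C_n$, while the $B_n$-tangle is the horizontal double braiding (clasp); the $n=0$ case only forces the $D_n$-tangle to be the identity, and indeed it is the clasp coefficient $B_n$, not a ``turnback'' coefficient, that is $p$-free. (ii) The step ``the turnback squares to zero, hence $\End_{\CC}(V_\a^*\ot V_\a)\cong\KK\times\KK[\e]/(\e^2)$'' does not follow from nilpotence alone (a three-dimensional algebra containing a nilpotent could be $\KK[\e]/(\e^3)$, say); it needs the decomposition $V_\a^*\ot V_\a\cong P\oplus W$ with $\End(P)$ two-dimensional local, which you assert but do not establish. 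Neither slip is fatal --- the closure equations would correct the labels, and the Jordan structure is in any case subsumed in the $R$-matrix verification you postpone --- but until those explicit computations are done the argument does not yet replace the citation of \cite{Ishii:algebraic}.
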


\medskip

As an immediate consequence we can compute $LG(H_n) = B_n+C_n$ by closing the left side of each tangle and using properties b) and c) of the module $V_\a$ mentioned at the beginning of Subsection \ref{subs: Links-Gould invariant}.
\medskip

%Note that $c_1(n)\in \Z[q^{\pm 1}]$ and 

%Here, as usual, the above relation is supposed to involve the Links-Gould invariants of four links, that coincide everywhere except on the portion indicated, where they differ by a 2-tangle. 
When $n=1$ we refer to the above formula as a ``cubic skein relation'' since it directly follows from the $R$-matrix having a cubic minimal polynomial (see \cite{Ishii:algebraic}). Note that $C_n,D_n$ are polynomials of degree one in $p+p^{-1}$ with coefficients in $\Z[q^{\pm 1}]$, while $B_n$ does not depend on $p$ at all.
\medskip

It will be useful to have an expression of $\id_{V_{\a}^*\ot V_{\a}}$ in terms of horizontal double braidings. We can obtain this from the cubic skein relation (Lemma \ref{lemma: Ishii's skein}) as follows. When $n=2$, rotating the tangles counterclockwise by 90 degrees and then rotating them along a horizontal axis by 180 degrees results in

\begin{figure}[H]
\begin{align*}
\begin{matrix}
\includegraphics[width=2cm]{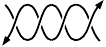}
\end{matrix} 
=B_2 \begin{matrix}
\includegraphics[width=0.7cm]{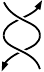}
\end{matrix}
+C_2\begin{matrix}
\includegraphics[width=0.7cm]{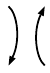}
\end{matrix}   
 +D_2\begin{matrix}
\includegraphics[width=0.7cm]{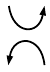}
\end{matrix}.
\end{align*}
\end{figure}

Now, multiply the cubic skein relation ($n=1$) by $B_2=1+q^2$:

\begin{figure}[H]
\begin{align*}
B_2\begin{matrix}
\includegraphics[width=0.7cm]{1LGvertical.pdf}
\end{matrix} 
=B_2B_1 \begin{matrix}
\includegraphics[width=1.5cm]{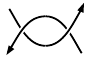}
\end{matrix}
+B_2C_1\begin{matrix}
\includegraphics[width=0.7cm]{LGcapcup.pdf}
\end{matrix}   
 +B_2D_1\begin{matrix}
\includegraphics[width=0.7cm]{LG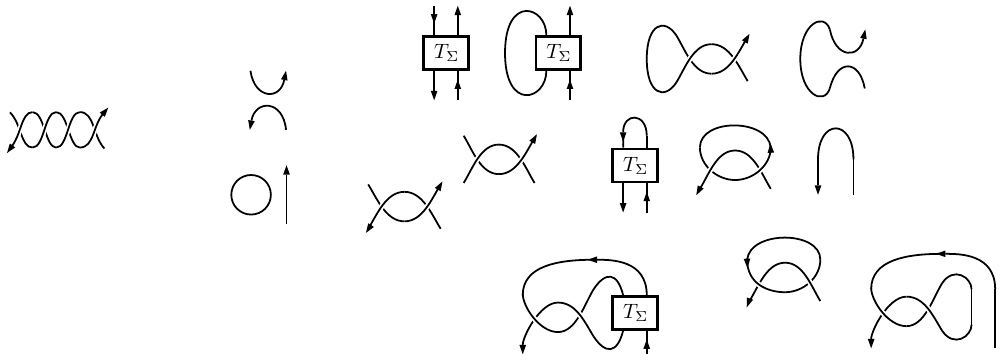}
\end{matrix}.
\end{align*}
\end{figure}

Subtracting this from the previous equation, we can cancel the vertical double braiding:

\begin{figure}[H]
\begin{align*}
\begin{matrix}
\includegraphics[width=2cm]{4LGhorizontal.pdf}
\end{matrix} 
=B_2B_1 \begin{matrix}
\includegraphics[width=1.5cm]{1LGhorizontal.pdf}
\end{matrix}
+(D_2+B_2C_1)\begin{matrix}
\includegraphics[width=0.7cm]{LGcapcup.pdf}
\end{matrix}   
 +(C_2+B_2D_1)\begin{matrix}
\includegraphics[width=0.7cm]{LGidentity.pdf}
\end{matrix}.
\end{align*}
\end{figure}

A simple computation shows that $$D_2+B_2C_1=-q^2, \hspace{1cm} C_2+B_2D_1=-2(1+q^2)+2q(p+p^{-1}).$$
Denote $g(q,p)=q^{-1}(C_2+B_2D_1)$, note that $g(q,p)$ has degree one in $p+p^{-1}$. Multiplying by $q^{-1}$, the previous equation can be written as

\begin{figure}[H]
\begin{align}
\label{eq: identity in terms of horizontal braidings}
g(q,p)\begin{matrix}
\includegraphics[width=0.7cm]{LGidentity.pdf}
\end{matrix} 
=q \begin{matrix}
\includegraphics[width=0.7cm]{LGcapcup.pdf}
\end{matrix}
-(q^{-1}+q)\begin{matrix}
\includegraphics[width=1.5cm]{1LGhorizontal.pdf}
\end{matrix}   
 +q^{-1}\begin{matrix}
\includegraphics[width=2cm]{4LGhorizontal.pdf}
\end{matrix}.
\end{align}
\end{figure}

\subsection{Genus bounds} For a polynomial $f(p,q)\in \Z[q^{\pm 1},p^{\pm 1}]$ written as 
\[f(p,q)=\sum_{k=r}^sf_k(q)p^k \text{ with } f_k(q)\in \Z[q^{\pm 1}],\ f_s,f_r\neq 0,\ r\leq s\] 
we use the notation $\deg_pf:=s-r$ and $\deg_p^+:=s$. Note that $\deg_p^+$ satisfies $\deg_p^+(f+g)\leq \max\{\deg_p^+f,\deg_p^+g\}$ for any $f,g\in\Z[q^{\pm 1},p^{\pm 1}]$.

\begin{theorem}
\label{theorem: genus bound for LG}
   %Let $L$ be an $s$-component link and $g(L)$ be its Seifert genus. Then $$\deg_p LG(L;p,q)\leq 2(2g(L)+s-1)$$
     Let $\S$ be a compact oriented surface embedded in $S^3$ with no closed components. Then $$\deg_p LG(\p \S; p,q)\leq 2( 1-\chi(\S)).$$
\end{theorem}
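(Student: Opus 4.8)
The plan is to reduce the theorem to a statement about Seifert surfaces built from a special diagram, and then use the skein relation of Lemma~\ref{lemma: Ishii's skein} to control the $p$-degree inductively. First I would recall that any compact oriented surface $\S\sb S^3$ with no closed components deformation retracts onto a wedge of circles, so $1-\chi(\S) = b_1(\S) - b_0(\S) + (\text{something})$; more usefully, $\S$ can be isotoped to a surface built from $n_0$ disjoint disks (one per boundary-connected component, or one per Seifert circle) joined by $n_1$ twisted bands, with $\chi(\S) = n_0 - n_1$, hence $1-\chi(\S) = n_1 - n_0 + 1$. I would present $\p\S$ as the ``plat-like'' closure obtained by attaching these bands, where each band carries some number of half-twists and the bands are stacked in a braid-like fashion. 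The goal becomes: the contribution of each band to the $p$-degree is at most $2$, and we gain a factor that makes the count come out to $2(n_1 - n_0 + 1) = 2(1-\chi(\S))$.

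The key computational input is that in the basis $\{T_1, T_2, T_3\}$ of $\End_{\CC}(V_\a^* \ot V_\a)$ from Lemma~\ref{lemma: 3-dimensional End and basis}, a full twist (or a crossing) expands with coefficients $B_n, C_n, D_n$ that have $\deg_p \le 2$ (indeed $C_n, D_n$ are degree one in $p + p^{-1}$ and $B_n$ is independent of $p$). More precisely I would set up a ``state sum'': cut the surface diagram along horizontal levels so that between consecutive levels there is a single band (one elementary twisted-band event) together with cup/cap events that merge or split Seifert disks. At each level the $V_\a$-colored morphism lives in $\End_\CC(V_\a^{*\ot k}\ot V_\a^{\ot \ell})$, and I would track the maximal $\deg_p^+$ of the coefficient functions. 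A band event multiplies the current morphism by a crossing, contributing $\le 2$ to $\deg_p$ by Lemma~\ref{lemma: Ishii's skein}; a cup or cap event that reduces the number of strands uses properties $(b),(c)$ (the ribbon element acts by $1$, quantum dimension is $0$) and crucially does \emph{not} increase $\deg_p^+$ — in fact, because $\dim V_\a = 0$, one should gain here, which is what compensates the $-n_0$ in the Euler characteristic count. Summing $2$ per band over $n_1$ bands and accounting for the $n_0$ disk-closures gives $\deg_p LG(\p\S) \le 2n_1 - 2n_0 + 2 = 2(1-\chi(\S))$.

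The main obstacle I expect is making the bookkeeping honest: one has to choose the surface decomposition so that every elementary move is either a single crossing/twist or a single cup/cap, show that the cup/cap moves genuinely do not raise $\deg_p^+$ (this needs the explicit structure of the coevaluation/evaluation in the PBW basis, or a clean categorical argument using property (a) that all structure morphisms have Laurent-polynomial matrix entries in $q, q^\a$), and verify that the global normalization — the passage from the $(1,1)$-tangle value $c_{L_o}$ to $LG$, and the fact that $p = q^{2\a+1}$ — does not shift the degree. A cleaner alternative, which I would pursue if the state-sum bookkeeping gets unwieldy, is to induct directly on a band presentation of $\S$: a surface with $n_1$ bands is obtained from one with $n_1 - 1$ bands (or from a disjoint union of disks, whose boundary is an unlink with $LG = $ a power of $(p+p^{-1})$-free... actually $LG(\text{unknot}) = 1$ and disjoint unions vanish, so one must instead track the $(1,1)$-tangle morphism rather than the closed invariant) by adding one twisted band; Lemma~\ref{lemma: Ishii's skein} then expresses the new morphism as a $\Z[q^{\pm1}]$-combination of three morphisms each coming from a surface with one fewer band but with $\deg_p$ raised by at most $2$, and the inductive hypothesis closes the argument. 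Either way, Lemma~\ref{lemma: Ishii's skein} supplies the ``$+2$ per band'' and properties $(b),(c)$ supply the ``$-2$ per disk,'' and the only real work is assembling these into the Euler-characteristic inequality.
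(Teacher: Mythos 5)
Your central counting claim --- ``each band contributes at most $2$ to $\deg_p$, and the cup/cap events compensate the $-n_0$'' --- is not established by the ingredients you invoke, and it is in fact the hard part of the statement. Lemma~\ref{lemma: Ishii's skein} only controls the expansion of $n$ full twists on a \emph{trivially embedded} pair of antiparallel strands; in a general banded surface the bands are knotted and linked with each other, so between your horizontal levels there are arbitrarily many crossings of one band over another, each of which is an $R$-matrix with $p$-dependent entries. A naive state sum therefore gives a bound growing with the number of crossings, not with the number of bands $n_1$; showing that these contributions cancel down to ``$2$ per band'' is precisely the content of the connected-case theorem of \cite{KT:Links-Gould}, which the paper cites rather than reproves. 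Your fallback induction (peel off one band at a time and expand it in the basis of Lemma~\ref{lemma: 3-dimensional End and basis}) has the same problem in disguise: for an arbitrarily embedded band the expansion coefficients are not given by Lemma~\ref{lemma: Ishii's skein}, and the degree bounds one would need for them are exactly what Lemma~\ref{lemma: cut F along alpha, express in basis} provides --- whose proof in the paper \emph{uses} Theorem~\ref{theorem: genus bound for LG}, so this route is circular. Likewise, the assertion that cup/cap events ``gain'' degree because $\dim_q V_{\a}=0$ is only a hope as written; vanishing quantum dimension kills closed components (which is why one works with $(1,1)$-tangles), but it does not by itself lower $\deg_p^+$.

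For comparison, the paper's proof is much more modest in scope: it takes the connected case as a black box from \cite{KT:Links-Gould} (observing that the argument there needs only a handle decomposition of a connected surface, not connected boundary), and the genuinely new content is the disconnected case, handled by induction on the number of components. Joining two components of $\S$ by a band and applying the identity (\ref{eq: identity in terms of horizontal braidings}) gives
\begin{equation*}
g(q,p)\,LG(\p \S)=q\,LG(\p \S_0)-(q^{-1}+q)\,LG(\p \S_1)+q^{-1}LG(\p \S_2),
\end{equation*}
where $\S_0,\S_1,\S_2$ have one fewer component and Euler characteristic $\chi(\S)-1$; since $g(q,p)$ has degree one in $p+p^{-1}$, the factor on the left is what produces the gain of $2$ in $\deg_p$ per extra component. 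So the mechanism for the improvement is this multiplicative degree shift, not a degree gain at cups and caps. If you want to salvage your proposal, you should either cite \cite{KT:Links-Gould} for connected surfaces and supply only this component-joining induction, or else supply a genuine argument for why linked and knotted band crossings do not raise the $p$-degree beyond $2$ per band.
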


By the symmetry under $p\mapsto p^{-1}$ this is equivalent to $\deg_p^+LG(\p \S)\leq 1-\chi(\S)$. Note that in \cite{KT:Links-Gould} this result is proved for connected $\S$ with connected boundary. The argument there applies to general connected $\S$ as well: this is because the argument of \cite{KT:Links-Gould} only uses that the surface is obtained by attaching 1-handles (say $l$ 1-handles) to a single disk, and proves that $\deg_p LG(\p \S)\leq 2l$, which is the above theorem for connected $\S$. If $\S$ is disconnected and formed from $k$ disks by attaching $l$ 1-handles, the above theorem gives the improved bound $\deg_p LG(\p\S)\leq 2(1-k+l)$ instead of merely $\deg_p LG(\p\S)\leq 2l$. We deduce this from the connected case and the cubic skein relation.

\begin{proof}[Proof Theorem \ref{theorem: genus bound for LG}]
We proceed by induction on the number $k$ of components of $\S$. If $k=1$, this is the result of \cite{KT:Links-Gould}. Let $\S$ be a surface with $k+1$ components. Let $\S_0$ be obtained from $\S$ by joining two different components of $\S$ with a band. Let $\S_1$ be obtained from $\S_0$ by doing a full left-handed twist to that band and $\S_2$ be obtained by doing two full left-handed twists to the same band. Then, using (\ref{eq: identity in terms of horizontal braidings}), we get: 
\begin{equation}\label{eq:connect}g(q,p)LG(\p \S)=q LG(\p \S_0)-(q^{-1}+q)LG(\p \S_1)+q^{-1}LG(\p \S_2)\end{equation}
 Note that $\S_2,\S_1,\S_0$ have the same Euler characteristic and $\chi(\S_0)=\chi(\S)-1$. By the induction hypothesis, and since all coefficients are polynomials in $q$, the right hand side of Equation \eqref{eq:connect} is a polynomial with $\deg^+_p\leq 1-\chi(\S_0)=2-\chi(\S)$. But the left hand side has $\deg_p^+=1+\deg^+_pLG(\p \S)$ so we get $\deg_p^+LG(\p \S)\leq 1-\chi(\S)$ as desired. This finishes the induction and proves the theorem.

\end{proof}

\begin{figure}[htp!]
    \begin{center}
    \includegraphics[width=3cm]{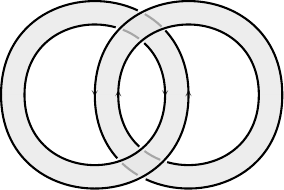}
    \caption{A Hopf link of annuli.}
    \label{fig.HopfAnnuli}
\end{center}
\end{figure}

\begin{example}

    Let $\S$ be the surface obtained from two untwisted annuli forming a Hopf link as in Figure \ref{fig.HopfAnnuli}. We computed
$$LG(\p \S,p,q) = -q^{-4}+2-q^4+ (q^3+q^{-3}-q-q^{-1})(p+p^{-1}).$$
This satisfies that $\deg_p^+LG(\p \S)\leq1-\chi(\S)=1$ as stated in the above theorem.
\end{example}

\section{Proof of the plumbing multiplicativity theorem}
\label{section: proof of plumbing multiplicativity}

In this section we prove Theorem \ref{theorem: main2, plumbing multiplicativity} and Corollary \ref{Corollary: main fibred}.
 
\begin{lemma}
\label{lemma: plumbing Hopf band}
   Theorem \ref{theorem: main2, plumbing multiplicativity} is true if one of the surfaces $\S_1$ or $\S_2$ is a Hopf band.
\end{lemma}
\begin{proof}
Suppose we plumb a positive Hopf band $A_1$ to a surface $\S_2$, call $\S$ the resulting surface. Since the plumbing happens in a ball neighborhood of a properly embedded arc on $\S_2$, we can isotope $\S$ to look like the first term in Figure \ref{fig.HopfAnnuli}. Applying Lemma \ref{lemma: Ishii's skein} for $n=1$ on the band that is plumbed gives a relation involving the following four surfaces:

\begin{figure}[htp!]
    \begin{center}
    \includegraphics[width=\linewidth]{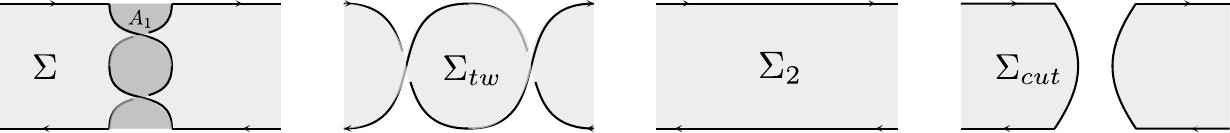}
    \caption{Left: the surface $\S$ obtained from plumbing $A_1$ onto $\S_2$. The other surfaces arise from applying the cubic skein relation.}
    \label{fig.HopfAnnuli}
\end{center}
\end{figure}

%   \begin{figure}[H]
%       \centering
%       \includegraphics[width=9cm]{}
%   \end{figure}
   The relation is 
   \begin{align}
   \label{eq: Hopf plumbing relation}
       LG(\p \S)=B_1 LG(\p \S_{tw})+C_1LG(\p \S_2)+D_1 LG(\p \S_{cut}).
   \end{align}
   Note that $\chi(\S_{tw})=\chi(\S_2)$ and $\chi(\S_{cut})=\chi(\S_2)+1$. By Theorem \ref{theorem: genus bound for LG} (note that $\S_{cut}$ might be disconnected) we have 
   \begin{align}
   \label{eq: rightmost term HOPF}
       \deg_p^+(D_1LG(\p \S_{cut}))\leq 1+1-\chi(\S_{cut})=1-\chi(\S_2)
   \end{align}
   since $\deg_p^+(D_1)=1$ while 
   \begin{align}
   \label{eq: leftmost term HOPF}
       \deg_p^+(B_1LG(\p \S_{tw}))\leq 1-\chi(\S_2)
   \end{align}
   since $B_1$ has degree zero. Since $\chi(\S)=\chi(\S_2)-1$ the last two equations and (\ref{eq: Hopf plumbing relation}) imply that the top term of $LG(\p\S)$ (i.e. the term corresponding to $p^{1-\chi(\S)}=p^{2-\chi(\S_2)}$) is the same as the top term of $C_1LG(\p\S_2)$. Since the coefficient of $p$ in $C_1$ is the top coefficient of $LG(\p A_1)$ (which is $q$), we get $$\top(\p\S,q)=q\top(\p\S_2,q)=\top(\p A_1,q)\top(\p\S_2,q)$$ as desired.

   The case for a negative Hopf band can be treated analogously and is left to the reader.
\end{proof}

Now let $\S_0$ be a compact oriented surface with two disjoint oriented arcs $\sigma_1,\sigma_2$ on its boundary, see Figure \ref{fig.sigma}. 
Gluing the two arcs yields a surface $\S$ while deleting the arcs $\sigma_1,\sigma_2$ from $\p \S_0$ yields a $(2,2)$-tangle we call $T_\S$. We denote the Reshetikhin-Turaev invariant (recall that all strands are assumed to be colored with $V_{\a}$) of $T_\S$ by $LG(T_{\S})\in\End_{\CC}(V_{\a}^*\ot V_{\a})$.
\begin{figure}[H]
    \centering
    \includegraphics[width=8cm]{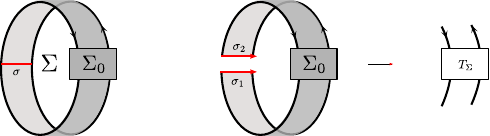}
    \caption{The surface $\S$ is obtained from gluing together the two arcs on the boundary of $\S_0$. The tangle $T_\S$ is also shown.}
    \label{fig.sigma}
\end{figure}

\begin{lemma}
\label{lemma: cut F along alpha, express in basis}
    We have 
    \begin{figure}[H]
\begin{align}
\label{eq: TSigma in a,b,c}
\begin{matrix}
\includegraphics[width=1cm]{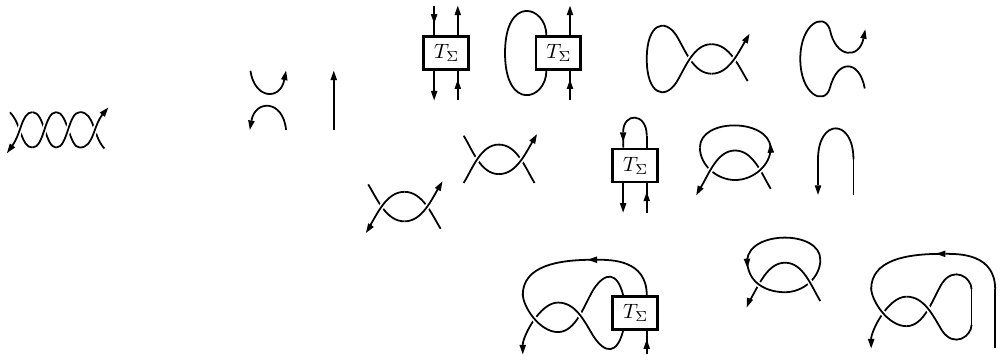}
\end{matrix}=a \begin{matrix}
\includegraphics[width=1.6cm]{LGpdf4.pdf}
\end{matrix}
+b\begin{matrix}
\includegraphics[width=1cm]{LGpdf2.pdf}
\end{matrix}   
 +c\begin{matrix}
\includegraphics[width=1cm]{LGpdf3.pdf}
\end{matrix}
\end{align}
\end{figure}
for some $a,b,c\in\Q(q^{\pm 1},p^{\pm 1})$ satisfying 
\begin{enumerate}
    \item $\deg^+_p(a)<\deg^+_p(b)$,
    \item $\deg^+_p(c)\leq \deg^+_p(b)$,
    \item $\deg^+_p(b)\leq 1-\chi(\S)$ and the coefficient of $p^{1-\chi(\S)}$ in $b$ equals that of $LG(\p \S ;p,q)$.
\end{enumerate}

\end{lemma}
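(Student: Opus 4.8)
The plan is to determine $a,b,c$ by feeding the expansion $LG(T_\S)=aT_1+bT_2+cT_3$ into the three partial‑closure functionals on $\End_\CC(V_\a^*\ot V_\a)$ that appear in the proof of Lemma \ref{lemma: 3-dimensional End and basis}, recognising each partial closure of the tangle $T_\S$ as the Links--Gould invariant of the boundary of an explicit surface assembled from $\S_0$, and then bounding the resulting polynomials with the genus bound of Theorem \ref{theorem: genus bound for LG}.

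First the geometric dictionary. Closing $T_\S$ along the two arcs $\s_1,\s_2$ the way they were glued to form $\S$ reproduces $\p\S$; by the scalar computation in the proof of Lemma \ref{lemma: 3-dimensional End and basis} this closure sends $aT_1+bT_2+cT_3$ to the scalar $a+b$, so $LG(\p\S)=a+b$. The two other closures of that proof --- the plat‑type closure, and the same closure precomposed with a negative horizontal double braiding --- send $T_\S$ to links $L_2,L_3$ bounding surfaces obtained from $\S_0$ by capping off/twisting along the arcs, whose Euler characteristics one computes in terms of $\chi(\S)$; since these closures act on the basis as in the cited proof, one gets $LG(L_2)=a\,LG(H_+)+c$ and $LG(L_3)=b\,LG(H_-)+c$. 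Recalling $LG(H_n)=B_n+C_n$ from Lemma \ref{lemma: Ishii's skein}, each of $LG(H_+),LG(H_-)$ has $\deg_p^+=1$ with leading coefficient a unit of $\Z[q^{\pm1}]$, and $LG(H_+)+LG(H_-)=(q+q^{-1})\bigl(p+p^{-1}-(q+q^{-1})\bigr)$ is again of $\deg_p^+=1$ with unit leading coefficient. Solving the $3\times 3$ linear system yields
\[
b=\frac{LG(H_+)\,LG(\p\S)-LG(L_2)+LG(L_3)}{LG(H_+)+LG(H_-)},\qquad a=LG(\p\S)-b,
\]
together with a parallel expression for $c$; since $a,b,c$ are built as quotients by $LG(H_+)+LG(H_-)$, this also explains why the lemma only asserts $a,b,c\in\Q(q^{\pm1},p^{\pm1})$.

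Now the estimates. Theorem \ref{theorem: genus bound for LG} bounds $\deg_p^+$ of each of $LG(\p\S),LG(L_2),LG(L_3)$ by $1-\chi$ of the corresponding surface; since the numerators above therefore have $\deg_p^+\le 2-\chi(\S)$ and the denominator has $\deg_p^+=1$ with unit leading $p$-coefficient, we get $\deg_p^+b\le 1-\chi(\S)$. The refinements --- the strict inequality $\deg_p^+a<\deg_p^+b$ of item (1), the inequality $\deg_p^+c\le\deg_p^+b$ of item (2), and the precise value of the $p^{1-\chi(\S)}$-coefficient of $b$ in item (3) --- I would obtain by comparing leading $p$-coefficients in the explicit formulas, using the already‑proved Hopf‑band case (Lemma \ref{lemma: plumbing Hopf band}) to relate the leading coefficients of $LG(L_2),LG(L_3)$ to that of $LG(\p\S)$. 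Once $\deg_p^+a<\deg_p^+b$ is known, item (3) is automatic from the identity $LG(\p\S)=a+b$.

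The main obstacle I anticipate is bookkeeping rather than anything conceptual: computing the Euler characteristics of the auxiliary surfaces bounding $L_2,L_3$ \emph{exactly} (being off by $1$ in either direction breaks the degree count), and checking that dividing by $LG(H_+)+LG(H_-)$ produces no unexpected drop in $p$-degree. What makes the latter safe is that $LG(H_+)\neq -LG(H_-)$ --- precisely the non‑vanishing that made the three partial‑closure functionals linearly independent in Lemma \ref{lemma: 3-dimensional End and basis}.
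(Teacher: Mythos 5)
Your proposal is correct and follows essentially the same route as the paper: the same three partial closures produce the same linear system $LG(\p\S)=a+b$, $LG(\p\S_0)=a\,LG(H_+)+c$, $LG(\p\S')=b\,LG(H_-)+c$, and the refinements come from the genus bound together with the Hopf-band case (Lemma \ref{lemma: plumbing Hopf band}) forcing the cancellation of top $p$-terms, exactly as in the paper's elimination (the paper solves for $a$ where you solve for $b$, a cosmetic difference). The only nitpick is that the leading $p$-coefficient of $LG(H_+)+LG(H_-)$ is $q+q^{-1}$, which is not a unit of $\Z[q^{\pm1}]$, but since $a,b,c$ live in $\Q(q^{\pm1},p^{\pm1})$ only its nonvanishing is needed for the degree count.
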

\begin{proof}
Referring to the pictures in Figure \ref{fig.sigma} denote by $\S'$ the surface obtained by plumbing a negative Hopf band to $\S$ along $\s$:
\begin{figure}[H]
    \centering
    \includegraphics[width=8cm]{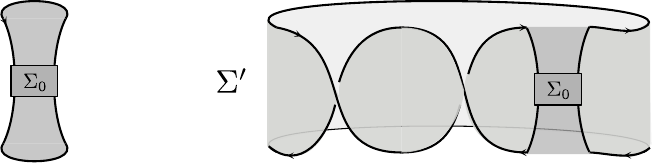}
\end{figure}

Since $V_{\a}$ is simple, we have

\begin{figure}[H]
\begin{align}
\label{eqtn TSigma, Sigma cut, Sigma prime}
\begin{matrix}
\includegraphics[width=1.6cm]{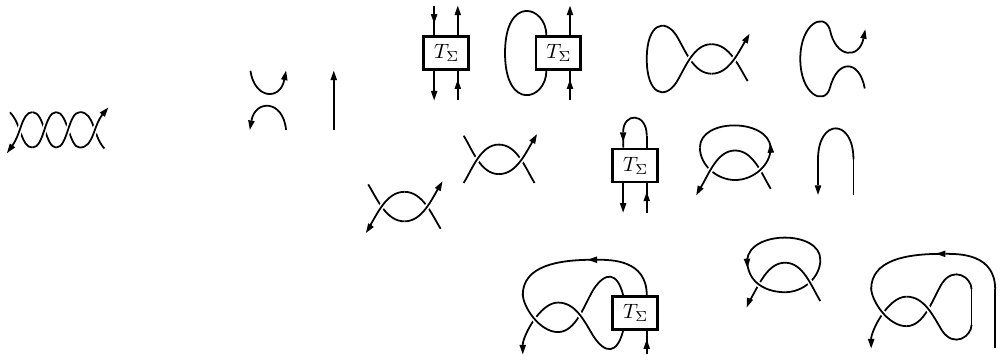}
\end{matrix} 
&=LG(\p\S)\cdot\begin{matrix}
\includegraphics[width=0.2cm]{identity.pdf}
\end{matrix} \ , &
\begin{matrix}
\includegraphics[width=1cm]{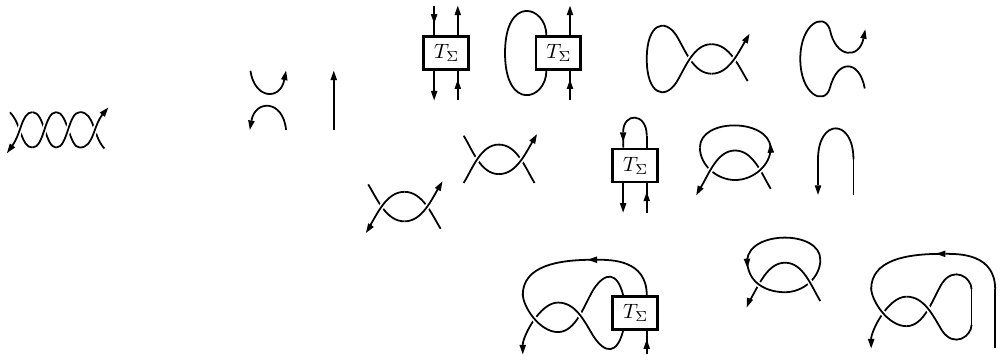}
\end{matrix}   
&=LG(\p\S_{0})\cdot\begin{matrix}
\includegraphics[width=1cm]{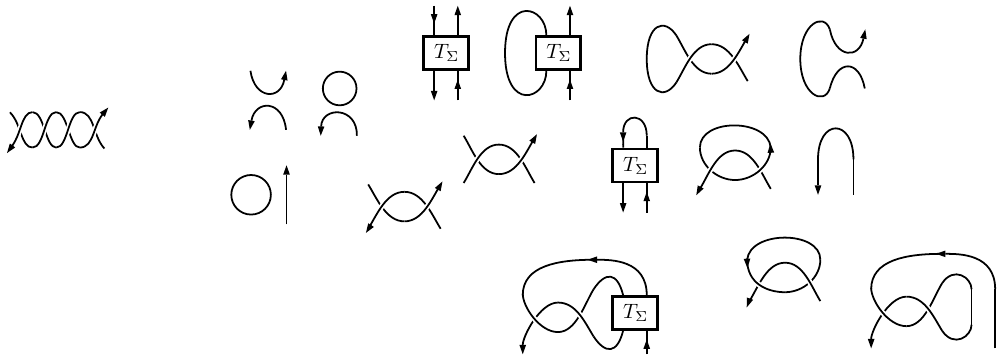}
\end{matrix} \ , \\
 \begin{matrix}
\includegraphics[width=2.3cm]{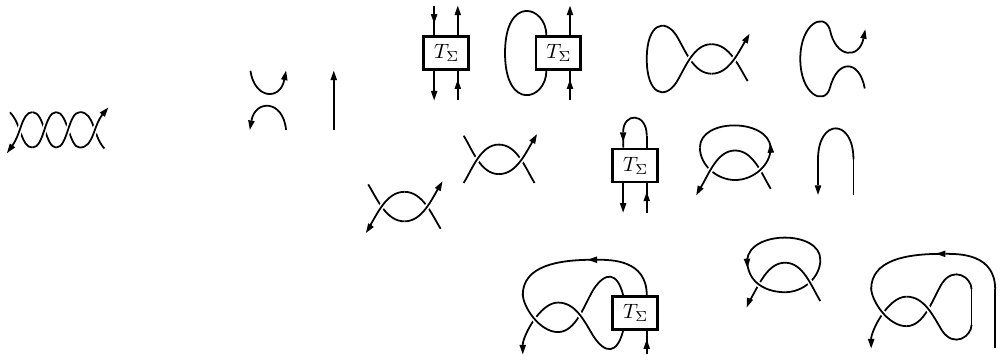}
\end{matrix}   
 &=LG(\p\S')\cdot\begin{matrix}
\includegraphics[width=1cm]{cap}
\end{matrix}  & & \nonumber
\end{align}
\end{figure}

By Lemma \ref{lemma: 3-dimensional End and basis} we can write $LG(T_{\S})$ with some coefficients $a,b,c \in \Q(q^{\pm 1},p^{\pm 1})$ as in (\ref{eq: TSigma in a,b,c}). We have to show they satisfy the listed properties. To do this, we will express $a,b,c$ in terms of Links-Gould invariants of the links $\p\S,\p\S_{0}$ and $\p\S'.$ To begin, we take the left trace in (\ref{eq: TSigma in a,b,c}):

\begin{figure}[H]
\begin{align*}
\begin{matrix}
\includegraphics[width=1.5cm]{proofLGTSigmaleftclosure.pdf}
\end{matrix} 
=a\cdot\begin{matrix}
\includegraphics[width=1.7cm]{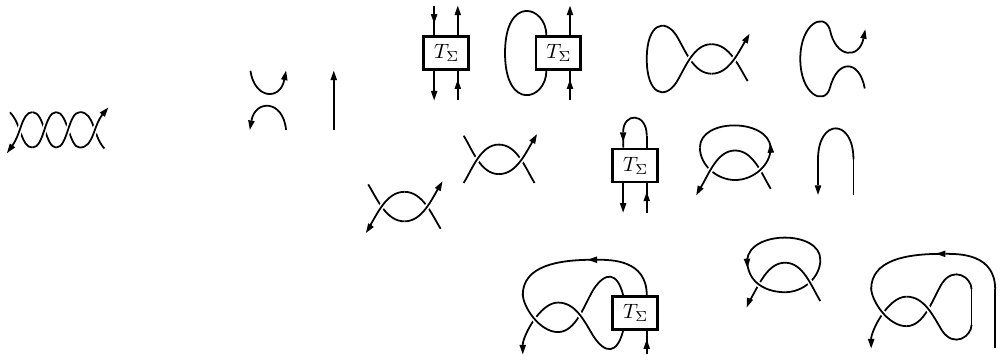}
\end{matrix}
+b\cdot\begin{matrix}
\includegraphics[width=1.4cm]{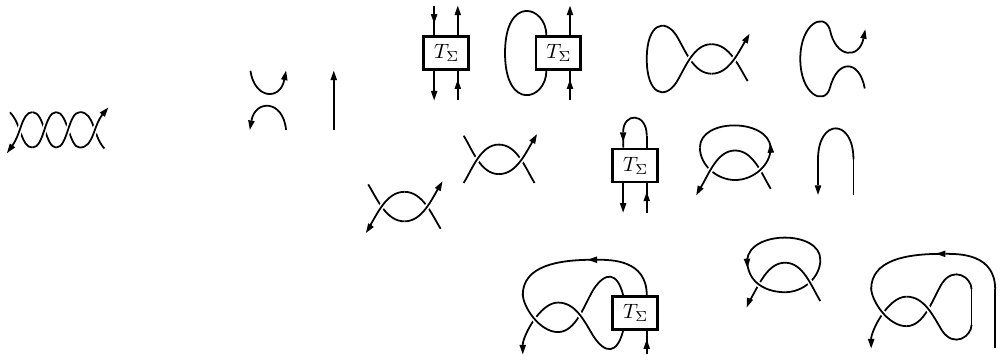}
\end{matrix}   
 +c\cdot \begin{matrix}
\includegraphics[width=1.5cm]{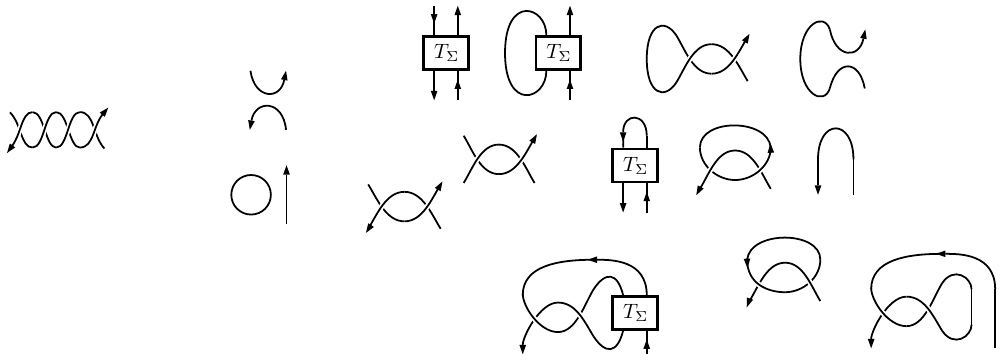}
\end{matrix}.
\end{align*}
\end{figure}

The LHS is $LG(\p\S)\cdot\id_{V_{\a}}$ by (\ref{eqtn TSigma, Sigma cut, Sigma prime}) and the RHS is $(a+b)\cdot\id_{V_{\a}}$ since $V_{\a}$ has zero quantum dimension and the twist on $V_{\a}$ is 1 (see $(b)$ and $(c)$ in Subsection \ref{subs: Links-Gould invariant}). Hence we obtain
\begin{align}
\label{eq: LG Sigma a,b}
    LG(\p \S)=a+b
\end{align}

Now close the two top strands in (\ref{eq: TSigma in a,b,c}):

\begin{figure}[H]
\begin{align*}
\begin{matrix}
\includegraphics[width=1cm]{proofLGTSigmatopclosure.pdf}
\end{matrix} 
=a\begin{matrix}
\includegraphics[width=1.3cm]{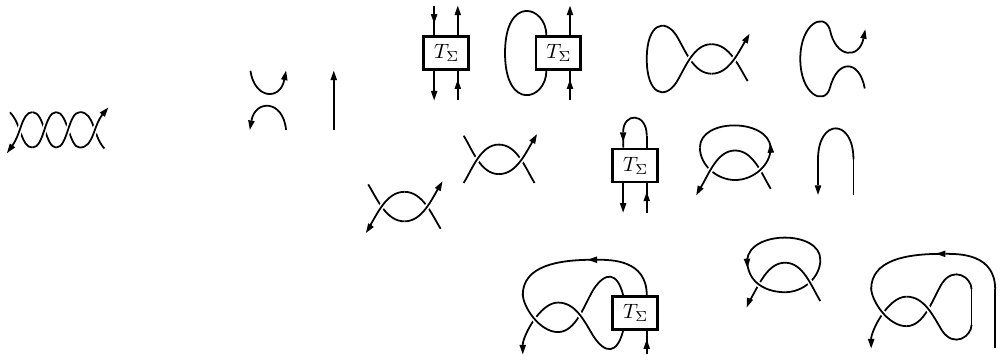}
\end{matrix}
+b\begin{matrix}
\includegraphics[width=1cm]{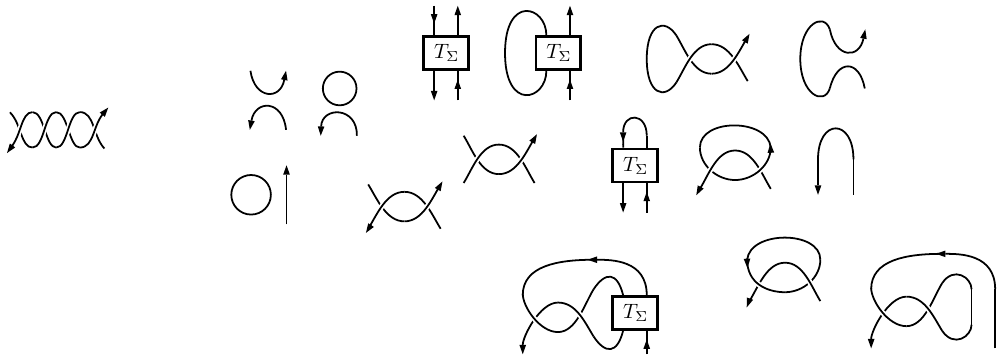}
\end{matrix}   
 +c\begin{matrix}
\includegraphics[width=1cm]{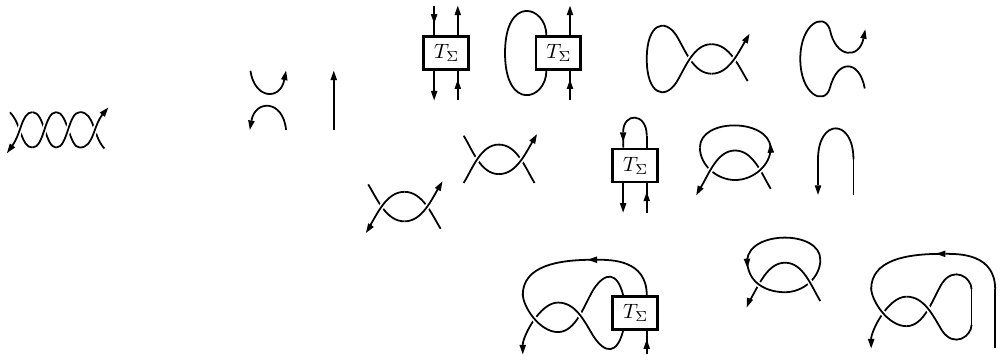}
\end{matrix}.
\end{align*}
\end{figure}

By a similar argument using (\ref{eqtn TSigma, Sigma cut, Sigma prime}) we obtain

\begin{align}
\label{eq: LG Scut a,c}
    LG(\p \S_{0})=aLG(H_+)+c.
\end{align}

Finally, compose (\ref{eq: TSigma in a,b,c}) with a negative horizontal double twist on the left:

\begin{figure}[H]
\begin{align*}
\begin{matrix}
\includegraphics[width=2cm]{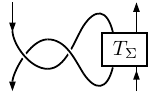}
\end{matrix} 
=a\begin{matrix}
\includegraphics[width=0.7cm]{LGcapcup.pdf}
\end{matrix}
+b\begin{matrix}
\includegraphics[width=1.4cm]{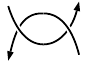}
\end{matrix}   
 +c\begin{matrix}
\includegraphics[width=1.7cm]{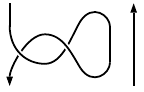}
\end{matrix}.
\end{align*}
\end{figure}

Closing the two top strands and again using (\ref{eqtn TSigma, Sigma cut, Sigma prime}) we see that
\begin{align}
\label{eq: LG Sprime b,c}
    LG(\p \S')=bLG(H_-)+c.
\end{align}

Now, multiply (\ref{eq: LG Sigma a,b}) by $LG(H_-)$, substract (\ref{eq: LG Sprime b,c}) from it and replace $c$ by $LG(\p \S_{0})-aLG(H_+)$ ((\ref{eq: LG Scut a,c}) above). This results in the following equation for $a$:
\begin{align}
    \label{eq: solving for a}
    LG(H_-)LG(\p \S)+LG(\p \S_{0})-LG(\p \S')=(LG(H_-)+LG(H_+))a.
\end{align}

By the genus bound, the LHS has $\deg_p^+\leq 2-\chi$ where $\chi=\chi(\S)$ while the RHS has degree $1+\deg_p^+(a)$ (since $\deg_p^+(LG(H_-)+LG(H_+))=1$), hence $\deg_p^+(a)\leq 1-\chi$. We will show that $\deg_p^+(a)<1-\chi$. First, note that $\deg_p^+LG(\p \S_{0})\leq -\chi$ so $\S_{0}$ does not contribute to the top term of the LHS in \ref{eq: solving for a}. On the other hand, by Lemma \ref{lemma: plumbing Hopf band}, the coefficient of $p^{2-\chi}$ (i.e. the top term) of $LG(\p \S')$ is $\top(\p  A_-,q)\top(\p\S,q)$. But this is the same top term as in $LG(H_-)LG(\p \S)$ and these two cancel up in (\ref{eq: solving for a}). Thus, the LHS has $\deg_p^+<2-\chi$ from which $\deg_p^+(a)<1-\chi$ follows. Using this, $\deg_p^+(c)\leq 1-\chi$ follows from (\ref{eq: LG Scut a,c}) since $\deg_p^+LG(\p \S_{0})\leq -\chi$ and $\deg_p^+(aLG(H_+))\leq 1-\chi$. Finally, since $LG(\p \S)=a+b$ and $\deg_p^+(a)\leq -\chi$, it follows that the coefficient of $p^{1-\chi}$ in $b$ is the same as that of $LG(\p \S)$, as desired. 
\end{proof}

\begin{proof}[Proof of Theorem \ref{theorem: main2, plumbing multiplicativity}]
Let $\S$ be the surface obtained by plumbing $\S_1$ onto $\S_2$ along an arc $\s\sb \S_2$. We can suppose the plumbing happens in a small neighborhood of the form $N(\s)\t I$ on the negative side of $\S_2$ (here $N(\s)$ denotes a tubular neighborhood of $\s\sb \S_2$). Applying Lemma \ref{lemma: cut F along alpha, express in basis} to $\S_1$, we get a relation involving the following surfaces:
    \begin{figure}[H]
        \centering
        \includegraphics[width=\linewidth]{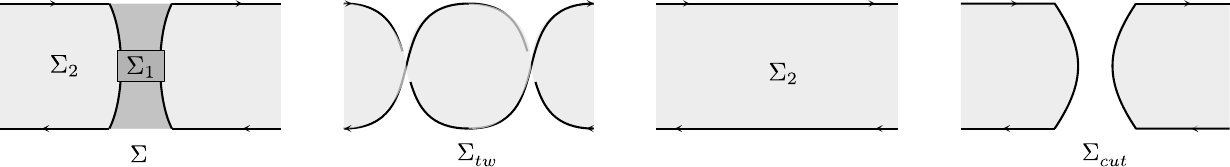}
      
    \end{figure}

    \medskip
This relation is 
\begin{align}
\label{eq: proof of THM2 relation}
    LG(\p \S)=a\cdot LG(\p \S_{tw})+b\cdot LG(\p \S_2)+c\cdot LG(\p \S_{cut})
\end{align}
where $a,b,c$ are as in the previous lemma. By the genus bound, $\deg_p^+ LG(\p \S_{cut})\leq -\chi(\S_2)$ so that by the same lemma $$\deg_p^+(c\cdot LG(\p \S_{cut}))\leq 1-\chi(\S_1)-\chi(\S_2).$$
We also have $\deg_p^+(a)\leq -\chi(
\S_1
)$ by the lemma so that $$\deg_p^+(a\cdot LG(\p \S_{tw}))\leq -\chi(\S_1)+1-\chi(\S_2).$$
Hence, only the middle term in the RHS of (\ref{eq: proof of THM2 relation}) can reach the term of degree $$1-\chi(\S)=1-(\chi(\S_1)+\chi(\S_2)-1)=2-\chi(\S_1)-\chi(\S_2).$$
Moreover, the top coefficient in $q$ of the middle term is precisely the top coefficient in $q$ of $LG(\p \S_1)LG(\p \S_2)$ by the above lemma. Hence the top coefficient in $q$ of $LG(\p \S)$ is the same as that of $LG(\p \S_1)LG(\p \S_2)$ which proves the theorem.
\end{proof}

\begin{proof}[Proof of Corollary \ref{Corollary: main fibred}]
    By a theorem of Giroux-Goodman \cite{GG:fibred}, every fiber surface in $S^3$ is obtained by plumbing/deplumbing Hopf bands $A_{\pm 1}$ (see Subsection \ref{subs: An's}). Suppose $F$ is obtained by plumbing $n_{\pm}$ copies of $A_{\pm 1}$ and deplumbing $m_{\pm}$ copies of $A_{\pm 1}$. Since $\top(\p A_{\pm 1},q)=q^{\pm 1}$, by Theorem \ref{theorem: main2, plumbing multiplicativity} we obtain $\top(L,q)=q^{n_+-n_--m_++m_-}$. The number $n_+-n_--m_++m_-$ coincides with $1-\chi(F)-2\la(\xi_F)$, see e.g. \cite{GG:fibred} or \cite{LNV:fibred-sl2}. Since $q$ is generic, $LG(L)$ detects the exponent of $q$ in $\top(L,q)$ and since $\deg_p^+ \ LG(L)=1-\chi(F)$ (because of the genus bound for Links-Gould and the fact that $LG(L)$ specializes to Alexander \cite{Ishii:LG-as-generalization}), it follows that $LG(L)$ detects the Hopf invariant as $$\la(\xi_F)=\frac{1}{2}(\deg^+_pLG(L)-\text{ exponent of $q$ in } \top(L,q)).$$
    
\end{proof}

\begin{remark}
   Note that, by the Giroux-Goodman theorem, Lemma \ref{lemma: plumbing Hopf band} was enough to deduce Corollary \ref{Corollary: main fibred}.
\end{remark}

\section{Proof of the topological applications} 
\label{section: proof of topological applications}

We now prove our corollaries for annular surfaces, i.e. surfaces obtained by plumbing and deplumbing unknotted twisted annuli. Note that we have $$LG(H_n)=C_n+B_n=(b_n(q)+B_n)+a_n(q)(p+p^{-1})$$
for some coefficients $a_n(q), b_n(q)$, see the remark below Lemma \ref{lemma: Ishii's skein}.

\begin{lemma}
    If $n>0$, the top coefficient of $LG(H_n)$ is $$a_n(q)=(2n-1)q+(2n-3)q^3+(2n-5)q^5+\dots+3q^{2n-3}+q^{2n-1}.$$
    The top coefficient of $LG(H_{-n})$ is $a_n(q^{-1})$.
\end{lemma}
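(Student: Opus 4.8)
The plan is to extract the claimed closed form for $a_n(q)$ directly from the formulas in Lemma \ref{lemma: Ishii's skein}. Recall that $LG(H_n) = B_n + C_n$, where $B_n = \frac{q^{2n}-1}{q^2-1}$ does not involve $p$, and $C_n = 2nR - B_n\big((q^2+1)R + 1\big)$ with $R = \frac{-q(p+p^{-1}) + (q^2+1)}{q^2-1}$. Since $R$ is linear in $p+p^{-1}$ with leading coefficient $\frac{-q}{q^2-1}$ (the coefficient of $p+p^{-1}$), the top coefficient $a_n(q)$ of $LG(H_n)$ in $p$ is exactly the coefficient of $p$ in $C_n$, namely
\[
a_n(q) = \big(2n - (q^2+1)B_n\big)\cdot\frac{-q}{q^2-1} = \frac{q\big((q^2+1)B_n - 2n\big)}{q^2-1}.
\]
First I would substitute $B_n = \frac{q^{2n}-1}{q^2-1}$ to get $a_n(q) = \frac{q}{q^2-1}\left(\frac{(q^2+1)(q^{2n}-1)}{q^2-1} - 2n\right) = \frac{q\big((q^2+1)(q^{2n}-1) - 2n(q^2-1)\big)}{(q^2-1)^2}$, so that the problem reduces to the purely algebraic identity
\[
\frac{(q^2+1)(q^{2n}-1) - 2n(q^2-1)}{(q^2-1)^2} = (2n-1) + (2n-3)q^2 + (2n-5)q^4 + \cdots + 3q^{2n-4} + q^{2n-2},
\]
after which multiplying by $q$ gives the stated $a_n(q)$.

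The cleanest way to verify this identity is to clear denominators and check that
\[
(q^2-1)^2\sum_{j=0}^{n-1}(2n-1-2j)q^{2j} = (q^2+1)(q^{2n}-1) - 2n(q^2-1).
\]
Writing $x = q^2$ this is the identity $(x-1)^2\sum_{j=0}^{n-1}(2n-1-2j)x^j = (x+1)(x^n-1) - 2n(x-1)$ in $\Z[x]$, which I would prove either by a short induction on $n$ or, more transparently, by recognizing $\sum_{j=0}^{n-1}(2n-1-2j)x^j$ as $2\sum_{j=0}^{n-1}(n-1-j)x^j + \sum_{j=0}^{n-1}x^j$ and using the standard evaluations $\sum_{j=0}^{n-1}x^j = \frac{x^n-1}{x-1}$ and $\sum_{j=0}^{n-1}(n-1-j)x^j = \frac{x^n - nx + n - 1}{(x-1)^2}$. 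Substituting these in and simplifying yields the right-hand side immediately; this is a one-line computation once the two generating-function evaluations are in hand.

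For the mirror statement, I would invoke the symmetry property of $LG$ recorded in Subsection \ref{subs: Links-Gould invariant}: $H_{-n}$ is the mirror of $H_n$, so $LG(H_{-n};p,q) = \overline{LG(H_n;p,q)}$, the image under $q\mapsto q^{-1}, p\mapsto p^{-1}$. Since the bar involution fixes $p + p^{-1}$, the coefficient of $p+p^{-1}$ in $LG(H_{-n})$ is $\overline{a_n(q)} = a_n(q^{-1})$, which is the claim. (Alternatively one can read this off directly from the formulas, noting $B_{-n}$ and $R$ behave predictably under $q\mapsto q^{-1}$ together with the $(q^2-1)^{-1}$ prefactors, but the mirror argument is shorter and conceptually cleaner.)

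The only mild obstacle is bookkeeping: one must be careful that $R$, $C_n$, and hence $LG(H_n)$ are genuinely Laurent polynomials (not just rational functions) so that ``top coefficient in $p$'' is well-defined — but this is guaranteed since $LG(H_n)$ is a link invariant and lies in $\Z[q^{\pm1},p^{\pm1}]$ by the general theory, and the displayed identity confirms the $(q^2-1)^2$ in the denominator cancels. No step here is deep; the content is the elementary polynomial identity above.
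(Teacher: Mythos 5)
Your proposal is correct and follows essentially the same route as the paper: both read off the coefficient of $p$ in $C_n$ from Ishii's formula, identify it with the claimed closed form by elementary algebra, and obtain the $H_{-n}$ statement from the mirror symmetry of $LG$. The only difference is cosmetic — the paper verifies the identity by showing both sides satisfy the recurrence $a_{n+1}=q^2a_n+(2n+1)q$ with $a_1=q$, whereas you clear denominators and sum the series directly; both verifications are routine and valid.
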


\begin{proof}
Let $a_n$ be defined by the above equation. From the definitions we see that the coefficient $a'_n$ of $p$ in $C_n$ is $$a'_n=\frac{-q}{q^2-1}[2n-B_n(q^2+1)].$$
    The coefficients $a_n$ defined above satisfy the recurrence $a_{n+1}=q^2a_n+(2n+1)q$. We will show that $a'_n$ satisfy the same recurrence: indeed
    \begin{align*}
        a'_{n+1}&=\frac{-q}{q^2-1}[2n+2-(1+q^2c_1(n))(1+q^2)]\\
        &=\frac{-q}{q^2-1}[1-q^2+2n(1-q^2)+q^2[2n-c_1(n)(q^2+1)]]\\
        &=q(2n+1)+q^2a'_n.
    \end{align*}
    Since $a'_1=q=a_1$ it follows that $a'_n=a_n$ for all $n$. The second assertion follows from $LG(H_{-n};p,q)=LG(H_n;p,q^{-1})$ since $H_{-n}$ is the mirror image of $H_n$.
\end{proof}

One can also compute $b_n(q)$ explicitly as $b_n(q)= -2n-\sum_{k=0}^{n-1} (4k+1)q^{2(n-k)}$ but we are not going to need this.

\medskip
\begin{comment}
   
\begin{proof}[Proof of Corollary \ref{corollary: positive annular have top-top=1}]
    If $\S$ is a plumbing of positive $A_n$'s, then $\top(\p\S,q)$ is a product of $a_n(q)$'s by Theorem \ref{theorem: main}. Since each $a_n(q)$ is monic in $q$ for $n>0$, it follows that $\top(\p\S,q)$ is also monic. If $\S'$ is obtained from $\S$ by deplumbing a subsurface $\S''\sb\S$, then $$\top(\p\S,q)=\top(\p\S',q)\top(\p\S'',q).$$
    This implies that the top term of $\top(\p\S,q)$ is the product of the top terms of $\top(\p\S',q)$ and $\top(\p\S'',q)$. Since all these polynomials have integer coefficients, it follows that $\top(\p\S',q)$ and $\top(\p\S'',q)$ are monic.
\end{proof}
 
\end{comment}
To prove Corollary \ref{corollary: plumbing-uniqueness for iterated hopf bands} we will need the following old theorem of Kakeya from 1912 \cite{Kakeya:roots}.

\begin{theorem}[Kakeya]
    Let $f(x)=a_0+a_1x+\dots a_nx^n$ be a polynomial with positive real coefficients. Then all the complex roots of $f(x)$ lie in the annulus $R_{min}(f)\leq |z|\leq R_{max}(f)$ where $$R_{min}(f)=\min_j \frac{a_j}{a_{j+1}}, \ \  R_{max}(f)=\max_j\frac{a_j}{a_{j+1}}.$$
\end{theorem}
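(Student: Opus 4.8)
The plan is to reduce Kakeya's theorem to the classical Eneström--Kakeya lemma for polynomials with non-decreasing positive coefficients, and then obtain both the outer and inner radii of the annulus by the elementary substitutions $x\mapsto R_{max}(f)\,x$ and $x\mapsto 1/x$.

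\textbf{Step 1 (the basic lemma).} First I would prove: if $0<c_0\leq c_1\leq\dots\leq c_n$ then every root $w$ of $g(x)=c_0+c_1x+\dots+c_nx^n$ satisfies $|w|\leq 1$. The standard argument expands $(1-x)g(x)=c_0+(c_1-c_0)x+\dots+(c_n-c_{n-1})x^n-c_nx^{n+1}$. If $g(w)=0$ and $|w|>1$, then $c_n|w|^{n+1}=\big|c_0+\sum_{j=1}^n(c_j-c_{j-1})w^j\big|\leq c_0|w|^n+\sum_{j=1}^n(c_j-c_{j-1})|w|^n=c_n|w|^n$, where we used $c_j-c_{j-1}\geq 0$ and $|w|^j\leq|w|^n$; this forces $|w|\leq 1$, a contradiction.

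\textbf{Step 2 (outer radius).} Set $R=R_{max}(f)$, so $a_j\leq R\,a_{j+1}$ for $0\leq j\leq n-1$. Consider $f(Rx)=\sum_j(a_jR^j)x^j$; its coefficients $c_j=a_jR^j$ are positive and satisfy $c_{j+1}/c_j=(a_{j+1}/a_j)R\geq 1$, hence are non-decreasing. By Step 1 every root of $f(Rx)$ has modulus $\leq 1$, so every root of $f$ has modulus $\leq R=R_{max}(f)$.

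\textbf{Step 3 (inner radius and conclusion).} Since $a_0>0$, zero is not a root of $f$, so we may pass to the reversed polynomial $\tilde f(x)=x^nf(1/x)=\sum_k a_{n-k}x^k$, whose roots are exactly the reciprocals of the roots of $f$. Its coefficients are positive, and $R_{max}(\tilde f)=\max_{0\leq k\leq n-1}a_{n-k}/a_{n-k-1}=\max_{1\leq m\leq n}a_m/a_{m-1}=1/\min_{0\leq j\leq n-1}(a_j/a_{j+1})=1/R_{min}(f)$. Applying Step 2 to $\tilde f$, every root of $\tilde f$ has modulus $\leq 1/R_{min}(f)$; taking reciprocals, every root of $f$ has modulus $\geq R_{min}(f)$. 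Combined with Step 2 this places all roots of $f$ in the closed annulus $R_{min}(f)\leq|z|\leq R_{max}(f)$. The only potential subtlety is the behavior of roots on the two boundary circles, but since the asserted inclusion is into the \emph{closed} annulus it suffices to exclude $|w|>1$ in Step 1, which the displayed inequality chain does without any equality analysis; the rest is routine bookkeeping with the two substitutions.
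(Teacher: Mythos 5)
Your proof is correct and complete. Note, however, that the paper does not prove this statement at all: it is quoted as a classical result with a citation to Kakeya's 1912 paper, so there is no in-paper argument to compare against. Your route is the standard one for the Enestr\"om--Kakeya annulus theorem: the multiplication by $(1-x)$ and the telescoping estimate in Step 1 correctly establish the unit-disc bound for non-decreasing positive coefficients (the telescoping sum $c_0+\sum_{j=1}^n(c_j-c_{j-1})=c_n$ is exactly what makes the chain close up), the rescaling $x\mapsto R_{max}(f)\,x$ in Step 2 reduces the general outer bound to that lemma since $c_{j+1}/c_j=(a_{j+1}/a_j)R_{max}(f)\geq 1$ by definition of the maximum, and the reversal $\tilde f(x)=x^nf(1/x)$ in Step 3 (legitimate because $a_0>0$ keeps $0$ from being a root and keeps $\deg\tilde f=n$) correctly converts $R_{max}(\tilde f)$ into $1/R_{min}(f)$ and hence yields the inner bound. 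Since the claimed annulus is closed, excluding only $|w|>1$ in Step 1 suffices, as you observe. This is a perfectly adequate self-contained substitute for the citation.
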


For every $n>0$ let $d_n(q)=q^{-1}a_n(q)$ and let $d_{-n}(q)$ be defined by $a_n(q^{-1})=q^{-(2n-1)}d_{-n}(q)$, that is, $$d_{-n}(q)=(2n-1)q^{2n-2}+(2n-3)q^{2n-4}+\dots+3q^2+1.$$

\begin{lemma}
\label{lemma: coprime an's}
    The polynomials $d_n(q),n\in\Z$ are pairwise coprime in $\Z[q]$.
\end{lemma}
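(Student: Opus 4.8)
The plan is to show that any two distinct $d_n, d_m$ ($n,m\in\Z$) share no common root in $\C$, and since the $d_n$ have constant term $1$ (so are primitive in $\Z[q]$ up to nothing — they are already primitive), coprimality over $\C$ plus primitivity gives coprimality in $\Z[q]$ by Gauss's lemma. So the real content is a statement about the location of the roots of $d_n$. I would split into cases according to the signs of $n$ and $m$.

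First consider $n,m>0$. Here $d_n(q)=q^{-1}a_n(q)=(2n-1)+(2n-3)q^2+\dots+q^{2n-2}$, a polynomial in $x:=q^2$ with \emph{positive, strictly decreasing} coefficients $a_0=2n-1, a_1=2n-3,\dots$ read from the constant term up, i.e. written in $x$ the coefficient of $x^j$ is $2(n-1-j)+1$. Apply Kakeya's theorem to $f(x)=\sum_j (2(n-1-j)+1)x^j$: the ratios $a_j/a_{j+1}=(2(n-1-j)+1)/(2(n-2-j)+1)$ are all $>1$, and the \emph{maximum} ratio is the last one, $a_{n-2}/a_{n-1}=3/1=3$, while the \emph{minimum} is the first, $a_0/a_1=(2n-1)/(2n-3)$. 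Hence every root $x$ of $d_n$ (in the $x=q^2$ variable) satisfies $\tfrac{2n-1}{2n-3}\le |x|\le 3$. The upper bound $3$ is uniform, but the lower bound $\tfrac{2n-1}{2n-3}$ is strictly decreasing in $n$ and tends to $1$; so the annuli for different $n$ are not automatically disjoint. To separate them I would instead argue: if $q_0$ were a common root of $d_n$ and $d_m$ with $n>m>0$, then $q_0$ is a root of $d_n(q)-q^{2(n-m)}d_m(q)$ or, more usefully, use the recurrence $a_{n+1}=q^2 a_n+(2n+1)q$ (established in the previous lemma), which gives $d_{n+1}=q^2 d_n+(2n+1)$; iterating, $d_n - q^{2(n-m)}d_m$ is a polynomial in $q^2$ with positive coefficients and nonzero constant term $2m+1$, hence has all roots with $|q^2|\ge$ (min ratio of its coefficients) $>0$; combined with Kakeya applied to $d_m$ giving $|q_0^2|\le 3$ one pins down the common root to satisfy two incompatible modulus constraints. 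Let me reorganize this so the main Kakeya estimate does the work cleanly.

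A cleaner route, and the one I would actually pursue: suppose $q_0\in\C$ is a common root of $d_n$ and $d_m$ with $0<m<n$. From $d_{k+1}(q)=q^2 d_k(q)+(2k+1)$ we get by induction $d_n(q)=q^{2(n-m)}d_m(q)+P_{m,n}(q^2)$ where $P_{m,n}(x)=\sum_{k=m}^{n-1}(2k+1)x^{\,k-m}$ has strictly positive, strictly increasing coefficients and constant term $2m+1$. Evaluating at $q_0$ (where $d_n(q_0)=d_m(q_0)=0$) gives $P_{m,n}(q_0^2)=0$. Now apply Kakeya to the \emph{reciprocal} polynomial of $P_{m,n}$ (whose coefficients are then positive and decreasing), or apply Kakeya directly: the roots $x$ of $P_{m,n}$ satisfy $|x|\ge \min_j a_j/a_{j+1}$ with $a_j=2(m+j)+1$; since $a_j/a_{j+1}<1$ here this bound is useless, but the roots also satisfy $|x|\le \max_j a_j/a_{j+1}=a_0/a_1=(2m+1)/(2m+3)<1$. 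So $|q_0^2|<1$. On the other hand, applying Kakeya to $d_m$ itself: writing $d_m(q)=\sum_{j=0}^{m-1}(2(m-1-j)+1)q^{2j}$ with \emph{decreasing} positive coefficients, its roots satisfy $|q^2|\ge \min_j a_j/a_{j+1} = a_{m-2}/a_{m-1}=3/1=3$ (the minimum of the increasing sequence of ratios $a_j/a_{j+1}>1$ is the first, $a_0/a_1=(2m-1)/(2m-3)$ — wait, I must take the minimum correctly). The ratios $a_j/a_{j+1}$ for decreasing positive $a_j$ are all $>1$; Kakeya's lower bound is $R_{\min}=\min_j a_j/a_{j+1}$. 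For $a_j=2(m-1-j)+1$ one has $a_j/a_{j+1}=(2(m-1-j)+1)/(2(m-2-j)+1)$, which is \emph{increasing} in $j$ (the terms get smaller so ratios grow), so the minimum is at $j=0$: $R_{\min}=(2m-1)/(2m-3)>1$. Thus $|q_0^2|\ge (2m-1)/(2m-3)>1$, contradicting $|q_0^2|<1$. This settles $0<m<n$.

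For the remaining cases: if $n>0$ and $m<0$, write $m=-m'$ with $m'>0$; then $d_{m'}$ has all roots with $|q^2|>1$ (just shown), hence $d_{-m'}(q)=q^{2m'-2}d_{m'}(1/q)$ has all roots with $|q^2|<1$, so $d_n$ (roots with $|q^2|>1$) and $d_{-m'}$ (roots with $|q^2|<1$) cannot share a root. The case $n,m<0$ reduces to the $n,m>0$ case under $q\mapsto 1/q$. Finally, each $d_n$ has no root on $|q^2|=1$ a fortiori from the strict inequalities above, and we should note $d_{\pm 1}=1$ is a unit, trivially coprime to everything. Since every $d_n\in\Z[q]$ is primitive (constant term $\pm 1$... actually constant term is $2n-1>0$ for $n>1$ and $1$ for the reciprocal ones, but primitivity only needs $\gcd$ of coefficients to be $1$, which holds since the top or bottom coefficient is $1$), Gauss's lemma upgrades pairwise coprimality in $\C[q]$ to pairwise coprimality in $\Z[q]$.

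The main obstacle is getting Kakeya's bounds to point in genuinely opposite directions for the two polynomials involved; the trick is to not compare $d_n$ and $d_m$ directly (their root-annuli overlap) but to extract, via the recurrence, the auxiliary polynomial $P_{m,n}$ whose roots are forced \emph{inside} the unit circle while the roots of $d_m$ are forced \emph{outside} it. Care is needed in reading off which endpoint of the sequence of consecutive-coefficient ratios gives $R_{\min}$ versus $R_{\max}$ — I would double-check monotonicity of $a_j/a_{j+1}$ in each case — but once that bookkeeping is fixed the contradiction is immediate.
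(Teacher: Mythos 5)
Your overall route is the paper's own: decompose $d_n(q)=q^{2(n-m)}d_m(q)+(\text{remainder})$ via the recurrence $d_{k+1}=q^2d_k+(2k+1)$, then apply Kakeya to the remainder and to $d_m$ separately so that a putative common root is forced into two disjoint regions. But there is a concrete error in the remainder: iterating the recurrence gives
\[
d_n(q)=q^{2(n-m)}d_m(q)+\sum_{k=m}^{n-1}(2k+1)\,q^{2(n-1-k)},
\]
i.e.\ in the variable $x=q^2$ the remainder is $(2n-1)+(2n-3)x+\dots+(2m+1)x^{n-m-1}$, with \emph{decreasing} coefficients, constant term $2n-1$ and leading coefficient $2m+1$ --- not your $P_{m,n}(x)=\sum_{k=m}^{n-1}(2k+1)x^{k-m}$, which is its reciprocal. (Check $m=1$, $n=3$: $d_3=q^4d_1+5+3q^2$, so the remainder is $5+3x$, whereas your formula gives $3+5x$.) Consequently Kakeya applied to the true remainder yields $\tfrac{2n-1}{2n-3}\le|x|\le\tfrac{2m+3}{2m+1}$: its roots lie \emph{outside} the unit circle, so your central claim $|q_0^2|<1$ is false, and the clean ``remainder inside, $d_m$ outside the unit circle'' contradiction you were aiming for evaporates. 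This is a genuine gap, not just the min/max endpoint bookkeeping you flagged: the circle $|x|=1$ simply does not separate the two root sets.

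The argument is rescued by the finer comparison the paper makes. The roots of the true remainder satisfy $|x|\le\tfrac{2m+3}{2m+1}$, the roots of $d_m$ satisfy $|x|\ge\tfrac{2m-1}{2m-3}$ (this half of your computation is correct), and $\tfrac{2m+3}{2m+1}<\tfrac{2m-1}{2m-3}$ because $(2m+3)(2m-3)<(2m+1)(2m-1)$; hence the two annuli are disjoint and no common root exists. With that one fix your proof coincides with the paper's, and your treatment of the remaining cases is fine and matches it: mixed signs via $d_{-n}(q)=q^{2n-2}d_n(1/q)$ together with all roots of $d_n$, $n>0$, having modulus $>1$; both indices negative via $q\mapsto 1/q$; and the primitivity/Gauss remark upgrading coprimality over $\C$ to coprimality in $\Z[q]$ (a point the paper glosses) is a welcome extra.
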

\begin{proof}
The statement is equivalent to proving that the polynomials have no common root in $\C$. All these are polynomials in $q^2$, so it clearly suffices to show that the polynomials $p_n(q)=d_n(q^{1/2}), n\in\Z$ have no common root. Let's prove it first for $\{p_n\}_{n>0}$, so
$$p_n(q)=d_n(q^{1/2})=(2n-1)+(2n-3)q+(2n-5)q^2+\dots+q^{n-1}.$$ If $n>m>0$, it is easy to see that $p_n(q)=f(q)+q^{n-m}p_m(q)$ where $$f(q)=(2n-1)+(2n-3)q+\dots+(2m+1)q^{n-m-1}.$$
We will show that $p_m$ and $f(q)$ cannot have a common root, this clearly implies that $p_m$ and $p_n$ do not have a common root.
Note that the sequence $\frac{2k+1}{2k-1}$ is decreasing and converges to 1 from the right. Hence $R_{max}(f)=\frac{2m+3}{2m+1}$ and by Kakeya's theorem, the roots of $f$ satisfy $|z|\leq \frac{2m+3}{2m+1}$. But for $p_m$ one has $R_{min}(p_m)=\frac{2m-1}{2m-3}$ and by the same theorem, the roots of $p_m$ satisfy $|z|\geq \frac{2m-1}{2m-3}$. Since $R_{max}(f)<R_{min}(p_m)$, it follows that $p_m$ and $f$ cannot have a common root. Since $p_{-n}(q)=q^{n-1}p_n(q^{-1})$ and $p_{-n}(0)\neq 0$, it follows that the $p_{-n},n>0$ are also pairwise coprime. Moreover, in the previous proof we also showed that , for $n>0$, all roots of $p_n$ satisfy $|z|>1$. Hence all roots of $p_{-n}$ satisfy $|z|<1$, so the $p_n$ are coprime to the $p_{-m}$ for $n,m>0$.
\end{proof}

In fact we conjecture that the $d_n(q)$'s are irreducible. This was checked  by computer in hundreds of cases but a general proof is missing.

\begin{proof}[Proof of Corollaries \ref{Corollary: obstruction to bound ANNULAR surface} and \ref{corollary: plumbing-uniqueness for iterated hopf bands}]
Suppose $L$ bounds a surface $\S$ obtained by plumbing annuli $A_{n_1},\dots,A_{n_k}$ and deplumbing $A_{m_1},\dots,A_{m_l}$ with $m_1,\dots,m_{i-1}\in \{\pm 1\}$ and $m_i,\dots,m_l\neq \pm 1$. By Theorem \ref{theorem: main}, we have $$\top(L,q)=\frac{a_{n_1}(q)\dots a_{n_k}(q)}{a_{m_1}(q)\dots a_{m_l}(q)}=q^{u}\frac{d_{n_1}(q)\dots d_{n_k}(q)}{d_{m_i}(q)\dots d_{m_l}(q)}$$
for some $u\in\Z$ (note that $a_{\pm 1}(q)=q^{\pm 1}$). Since $\top(L,q)\in\Z[q^{\pm 1}]$, $\Z[q^{\pm 1}]$ is a PID and the $d_n(q),n\in\Z$ are pairwise coprime, it follows that each $d_a(q)$ in the denominator above must appear in the numerator, that is, $m_i,\dots,m_l$ is a subsequence of $n_1,\dots,n_k$. This clearly implies Corollary \ref{Corollary: obstruction to bound ANNULAR surface}. Now, if we have another expression of $\S$ as an annular surface as stated in the corollary, then we would have $$q^{u}\frac{d_{n_1}(q)\dots d_{n_k}(q)}{d_{m_i}(q)\dots d_{m_l}(q)}=q^{u'}\frac{d_{n'_1}(q)\dots d_{n'_r}(q)}{d_{m'_{j}}(q)\dots d_{m'_{s}}(q)}.$$
Since no $d_{n}(q)$ is divisible by $q$ it follows that $u=u'$ and again by Lemma \ref{lemma: coprime an's}, it follows that the sequence $n_1,\dots,n_k$ minus $m_i,\dots,m_l$ equals the sequence $n'_1,\dots,n'_k$ minus $m'_j,\dots,m'_s$. 
   % Indeed, if $\S$ is obtained by plumbing iterated Hopf bands as in the statement, then $k=l=\dim H_1(\S)$ and by plumbing-multiplicativity of Links-Gould we obtain $$a_{n_1}(q)\dots a_{n_k}(q)=\text{ top coeff. } LG(\p\S)=a_{m_1}(q)\dots a_{m_k}(q).$$Multiplying by $q^{-k}$ on both sides and using that the $q^{-1}a_n(q)$ are coprime to each other and that $\Z[q^{\pm 1}]$ is a PID, the first assertion follows.   Now, if $A_m$ can be deplumbed, that is $\S=A_m\cup_{pl} \S'$ for some $\S'$, then $f(\S,q)=a_m(q)f(\S',q)$. But by hypothesis $a_m(q)$ is coprime to $f(\S,q)$ and it is not a unit if $m\neq 1$, hence a contradiction.
\end{proof}

It is easy to decide whether a given polynomial in $\Z[q^{\pm 1}]$ is a product of $a_n(q)$'s. For instance, we have:

\begin{proposition}
\label{prop: top coeff. of annular links}
Let $L$ be a link that bounds an annular surface. Let $\a\in\Z$ be the top coefficient of $\top(L,q)$ and $\b\in\Z$ the least coefficient. Then:
    \begin{enumerate}
        \item Both $\a,\b$ are odd.
        \item Let $\a=p_1^{n_1}\dots p_k^{n_k}$ be the prime factorization of $\a$. Similarly write $\b={p'_1}^{m_1}\dots {p'_l}^{m_l}$. Then $$\deg_q\top(L,q)\geq \sum_{i=1}^kn_i(p_i-1)+\sum_{j=1}^lm_j(p'_j-1).$$
    \end{enumerate}
\end{proposition}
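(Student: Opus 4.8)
The plan is to reduce everything to arithmetic properties of the single-factor polynomials $a_n(q)$, using that (up to a power of $q$) $\top(L,q)$ is a product of $d_n(q)$'s with $n>0$ and $d_{-n}(q)$'s with $n>0$ by Theorem \ref{theorem: main} and the coprimality Lemma \ref{lemma: coprime an's} (the denominators cancel into the numerator, exactly as in the proof of Corollaries \ref{Corollary: obstruction to bound ANNULAR surface} and \ref{corollary: plumbing-uniqueness for iterated hopf bands}). So it suffices to understand the top and least integer coefficients of a product of the $d_n$'s. First I would record the two one-factor facts: for $n>0$ the polynomial $d_n(q)=(2n-1)q^{2n-2}+\dots+3q^2+1$ has least coefficient $1$ (the constant term) and top coefficient $2n-1$, an odd number; and $d_{-n}(q)$ has least coefficient $2n-1$ and top coefficient $1$. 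The constant term of the whole product is then the product of the constant terms of each factor, and similarly the top coefficient is the product of the individual top coefficients (leading terms always multiply). In particular $\a$ is a product of the numbers $2n_i-1$ over factors with positive index together with $1$'s from the negative-index factors, and $\b$ is the analogous product over the negative-index factors; both are products of odd integers, hence odd. This proves (1).

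For (2) the key point is a degree estimate: for each positive-index factor $d_{n}$, the span of exponents (i.e.\ $\deg_q$) is $2n-2 = (2n-1)-1$, and the top coefficient is $2n-1$; for each negative-index factor $d_{-n}$ the span is again $2n-2$ and the least coefficient is $2n-1$. Since $\deg_q$ is additive under multiplication of polynomials, $\deg_q\top(L,q)$ equals the sum of the spans of all the surviving factors, which is $\sum (2n_i-1) - (\#\text{factors})$ where the sum runs over all surviving factors (positive and negative index), each contributing its odd leading/least coefficient $c:=2n-1$ and a span $c-1$. Now $\a$ is the product of the $c$'s of the positive-index factors and $\b$ is the product of the $c$'s of the negative-index factors, so the multiset of these odd numbers $c$ refines the prime factorizations $\a=\prod p_i^{n_i}$, $\b=\prod {p'_j}^{m_j}$. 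The inequality then follows from the elementary fact that for any odd number $c>1$ written as a product of primes $c=\prod r$, one has $c-1\geq \sum (r-1)$ (more generally, if $c=c'c''$ then $c-1\ge (c'-1)+(c''-1)$ since $c'c''-c'-c''+1=(c'-1)(c''-1)\ge 0$), applied repeatedly; summing over all surviving factors gives $\deg_q\top(L,q)=\sum(c-1)\ge \sum_i n_i(p_i-1)+\sum_j m_j(p'_j-1)$, where on the right we have used $c-1 \ge \sum_{r\mid c}(r-1)$ counted with multiplicity and then regrouped by prime.

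The one genuine subtlety — and the step I expect to be the main obstacle — is making sure no cancellation occurs when the $d_n$'s are multiplied together, so that the constant term and the leading term of the product really are the products of the corresponding terms of the factors. For the leading term this is automatic over $\Z$ (an integral domain). For the constant term one must note that every $d_n$ with $n>0$ has nonzero constant term $1$ and every $d_{-n}$ has nonzero constant term $2n-1$, so the product has constant term $\prod(\text{constant terms})\ne 0$; there is nothing to cancel because $\Z[q]$ is a domain. A secondary point is that $\top(L,q)$ is only determined up to an overall power of $q$, but multiplying by $q^{\pm 1}$ changes neither $\a$, nor $\b$, nor $\deg_q$, so the statement is well-posed and the argument above is unaffected. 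Finally one should double-check the degenerate cases: if $\top(L,q)$ is (a power of $q$ times) the empty product $1$, then $\a=\b=1$, both sides of (2) are $0$, and (1) holds vacuously; if the only surviving factors are $A_{\pm 1}$'s then again $\top(L,q)$ is a monomial and the same remark applies.
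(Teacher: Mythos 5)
Your proposal is correct and is essentially the paper's own proof: reduce via Theorem \ref{theorem: main2, plumbing multiplicativity} and Lemma \ref{lemma: coprime an's} to writing $\top(L,q)$, up to a power of $q$, as a product of $d_n(q)$'s, observe that the extreme coefficients of the factors (all odd, one equal to $1$ and the other to $2|n|-1$) multiply without cancellation, and combine additivity of the degree span with the elementary inequality $(c'-1)+(c''-1)\le c'c''-1$, which is exactly the paper's minimum-over-factorizations step (\ref{eq: minimum over factorization}). The only slip is that you swapped the roles of $d_n$ and $d_{-n}$ relative to the paper's definitions (so which family contributes to $\a$ versus $\b$ is interchanged), but this is harmless since the statement and the argument are symmetric in $\a$ and $\b$.
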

\begin{proof}
The first statement follows easily since all coefficients of the $a_n(q)$'s are odd. To prove the second one, note first that 
\begin{align}
\label{eq: minimum over factorization}
    \min\Bigg\{\sum_{i=1}^r(d_i-1) \ \Bigg| \  d_1,\dots,d_r\in\Z \text{ such that } \prod_{i=1}^rd_i=\a \Bigg\}=\sum_{i=1}^kn_i(p_i-1).
\end{align}
Indeed, if any $d_i$ was not prime, say $d_i=ab$ with $a,b>1$ then clearly $d_i-1>(a-1)+(b-1)$. Hence, the minimum of the above set is achieved when each $d_i$ is a prime divisor of $\a$. Now suppose $L$ is annular. By part $(1)$ of Corollary \ref{corollary: plumbing-uniqueness for iterated hopf bands}, $\top(L,q)$ has to be a product of $a_n(q)$'s (up to a power of $q$, but this does not affects $\deg_q$). Write $\top(L,q)=q^sf(q)g(q)$ where $f(q)=\prod_{i=1}^ra_{d'_i}(q^{-1})$ and $g(q)=\prod_{j=1}^sa_{c'_j}(q)$, each $d'_i,c'_j>0$ and $s\in\Z$. Write $d_i=2d'_i-1$ and $c_j=2c'_j-1$. Note that $g(q)$ is monic in $q$ with least coefficient $\prod_{j=1}^sc_j$, while $f(q)$ is monic in $q^{-1}$ with top coefficient $\prod_{i=1}^r d_i$. Hence, $\a=\prod_{i=1}^r d_i$ and $\b=\prod_{j=1}^sc_j$ while the degree of $f(q)g(q)$ is the sum $$\sum_{i=1}^r (d_i-1)+\sum_{j=1}^s(c_j-1).$$
By (\ref{eq: minimum over factorization}), the minimum degree of $f(q)g(q)$ subject to the constraint on top/least coefficient, is given by the prime factorizations of $\a$ and $\b$. 
\end{proof}

\def\GK{K^{plumb}(\text{links})}

\begin{remark}
    \label{remark: Grothendieck group}
        Corollary \ref{corollary: plumbing-uniqueness for iterated hopf bands} can be restated in terms of the following Grothendieck group: let $\GK$ be the free abelian group spanned by all isotopy classes $[L]$ of links $L\sb S^3$ with $\top(L,q)\neq 0$ (e.g. alternating, fibred, positive links) modulo the relations $[\p\S]-[\p\S_1]-[\p\S_2]$ for every compact, oriented surfaces $\S,\S_1,\S_2$ in which $\S$ is a plumbing of $\S_1,\S_2$ (and the top coefficients of the Links-Gould invariants of $\S,\S_1,\S_2$ are all non-zero). Theorem \ref{theorem: main} implies that $\top(L,q)$ defines a group homomorphism $$\GK\to\Q(q^{\pm 1}).$$
        It is easy to see that a family of links with pairwise coprime images in $\Z[q^{\pm 1}]$ are linearly independent in $\GK$. Hence, we have shown that (the images of) the iterated Hopf links $H_n=\p A_n, n\neq 0$ are linearly independent in $\GK$. This also implies Corollary \ref{corollary: plumbing-uniqueness for iterated hopf bands}: if $L$ bounds a surface obtained by plumbing $A_{n_1},\dots,A_{n_k}$ then $$[L]=[\p A_{n_1}]+\dots+[\p A_{n_k}]$$
        in $\GK$ (and similarly if deplumbing is allowed), so plumbing-uniqueness follows from linear independence of the $[H_n],n\neq 0$. As mentioned in the introduction, we conjecture that $\top(L,q)$ is multiplicative under Murasugi sum, hence we expect that the $[H_n],n\neq 0$ are linearly independent in the Grothendieck group obtained by replacing ``plumbing'' by Murasugi sum. Still, such a group would not be the same as the one of Cheng-Hedden-Sarkar \cite{CHS:Murasugi-sum-extremal-Floer}, since not all links are being considered. It is an interesting question whether the links $H_n,n\neq 0$ are linearly independent in the Grothendieck group of \cite{CHS:Murasugi-sum-extremal-Floer} too.
    \end{remark}

\section{Computations} \label{section: computations} A list with the Links-Gould invariants for prime knots with $\leq 10$ crossings can be found in \cite{DeWit:10-crossings}. Tables \ref{table: small knots} and \ref{table: 9-crossing knots} are based on those computations. For such small knots one has $\deg_p^+ LG(K)=2g(K)$. Denote $a_n(q)$ simply by $a_n$, thus $a_1=q, a_2=3q+q^3, a_3=5q+3q^3+q^5$, etc.

\begin{center}
\begin{table}[h]
    \begin{tabular}{|c|c|c|c|c|c|c|c|c|c|c|c|}
    \hline
       Knot & $3_1$ & $4_1$ & $5_1$ & $5_2$ & $6_1$ & $6_2$ & $6_3$ & $7_1$ & $7_2$ & $7_3$ & $7_4$   \\
        \hline
       % Genus &1&1&2&1&1&2&2&3&1&2&1&2&2&2\\
        Fibred & Y &Y &Y&N&N&Y&Y&Y&N&N&N\\
        \hline
        $\top(K,q)$ & $q^2$& $1$ &$q^4$ &$qa_2$ & $q^{-1}a_2$ & $q^2$ & $1$ & $q^6$ & $qa_3$ & $q^3a_2$ & $a_2^2$  \\
         \hline
         \hline
       $7_5$ & $7_6$ & $7_7$ &  $8_1$ &   $8_2$ &  $8_3$ &  $8_4$ &  $8_5$ &  $8_6$ &  $8_7$ &  $8_8$ &  $8_9$    \\
         \hline
      %   &1&3&1&2&3&2&3&2&3&3&2&2&2&2\\
 N& Y&Y&N&Y&N&N&Y&N&Y&N&Y\\
         \hline
    $q^3a_2$& $q^2$ & $1$  &   $q^{-1}a_3$ &  $q^4$ & $a_2a_{-2}$ & $q^3a_{-2}$ & $q^4$ & $qa_2$ & $q^2$ & $q^{-1}a_2$ & $1$    \\
         \hline
         \hline
 $8_{10}$ &  $8_{11}$& $8_{12}$ &  $8_{13}$ &  $8_{14}$  & $8_{15}$&$8_{16}$ &$8_{17}$ & $8_{18}$&$8_{19}$ &$8_{20}$ &$8_{21}$ \\
  \hline
Y&N&Y&N &N & N & Y &Y &Y &Y &Y &Y \\
\hline
 $q^2$ & $qa_2$ & $1$ & $q^{-1}a_2$ & $qa_2$& $6q^4+3q^6$ & $q^2$ &1 &1 &$q^6$ &1 &$q^2$  \\
         \hline
           
    \end{tabular}
\medskip

    \caption{$\top(K,q)$ and fibredness status of knots with $\leq 8$ crossings.}
    \label{table: small knots}
  \end{table}
\end{center}

We checked that all knots other than $8_{15}$ in Table \ref{table: small knots} do bound a surface made by plumbing annuli as specified from the top coefficient. For instance, $8_3$ (which has genus 1) bounds a plumbing of $A_2$ and $A_{-2}$, and $8_4$ bounds a plumbing of $A_{-2}$ with a certain number of Hopf bands. Since $g(8_4)=2$ there should be 3 Hopf bands (assuming no deplumbing is needed), hence the $q^3$ factor implies it must be a plumbing of three $A_1$'s, which is easy to see by applying Seifert's algorithm to a minimal (alternating) diagram of $8_4$. It has to be noted too that De Wit uses the mirror image of $8_4, 8_8, 8_{13}$ as they appear in KnotInfo. 

\medskip

Note that in this table $8_{15}$ is the only knot for which the top coefficient of $LG$ is not a product of $a_n(q)$'s. This can be observed from $(1)$ of Prop. \ref{prop: top coeff. of annular links}, or by noting that $3|\top(8_{15},q)$ and that no $a_n(q), n\in\Z\sm\{0\}$ is divisible by $3$. Note that by applying Seifert's algorithm to the standard diagram of $8_{15}$ one can see it is obtained by plumbing two positive Hopf bands to a pretzel surface, see Figure \ref{figure: pretzel piece}. The knot $9_{49}$ is also obtained by plumbing Hopf bands to such a pretzel surface, as indicated in the same figure. Hence, both the pretzel link that is the boundary of such pretzel surface and the knot $9_{49}$ cannot bound an annular surface either.

\begin{figure}
    \centering
    \includegraphics[width=3.5cm]{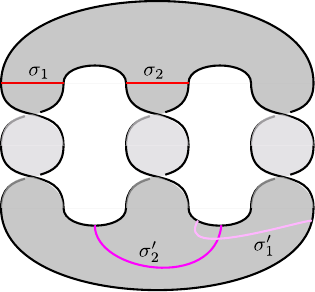}
        \caption{The knot $8_{15}$ (resp. $9_{49}$) is the boundary of the surface obtained by plumbing two positive Hopf bands on the arcs $\s_1,\s_2$ (resp. $\s'_1,\s'_2$). }
    \label{figure: pretzel piece}
\end{figure}
\medskip

In Table \ref{table: 9-crossing knots} we list $\top(K,q)$ for all 9-crossing knots. We see that the knots $9_k$ with $k=25,35,38,39,41,46,49$ do not bound any annular surface, since $\top(K,q)$ is not a product of $a_n(q)$'s. This can be checked in Mathematica or using Proposition \ref{prop: top coeff. of annular links}, for instance, $\deg_q(9_{35})=6<19-1$ so $9_{35}$ cannot bound an annular surface. For all other 9-crossing knots $\top(K,q)$ is a product of $a_n(q)$'s and one can check that the given knot bounds a plumbing of annuli as specified by $\top(K,q)$.

\medskip
\begin{table}[h]
\[
\begin{array}{c|c}
\text{{\bf top coefficient}}&\text{{\bf knot}}\\
\hline 
1 & 9_{24},9_{27},9_{30},9_{33},9_{34},9_{44},\\
\hline
q^2 & 9_{17},9_{22},9_{26},9_{28},9_{29},9_{31},9_{32},9_{40},9_{42},9_{45},9_{47},9_{48}\\
\hline
q^4 & 9_{11},9_{20},9_{36},9_{43}\\
\hline
q^8 & 9_1\\
\hline
qa_2 & 9_{12},9_{15},9_{21}\\
\hline
q^{-1}a_2 & 9_{14},9_{19},9_{37}\\
\hline
 q^5a_2  & 
9_3,9_6,9_9,9_{16}
 \\  \hline 
q^3a_{-2} & 9_8\\
\hline 
 q^3a_3 & 9_4,9_7
 \\ \hline 
  qa_4 & 9_2\\
 \hline 
a_2a_3 & 9_5
 \\ \hline 
 q^2a_2^2 & 9_{10},9_{13},9_{18},9_{23}
 \\ \hline 
  19q^2+20q^4+9q^6+q^8 & 9_{35}
 \\ \hline 
 14q^4+10q^6+q^8 & 9_{38} 
 \\ \hline 
 2+q^2+q^4 & 9_{46}\\
\hline
 6+3q^2 & 9_{41}\\
\hline
 6q^2+3q^4 & 9_{25},9_{39}\\
\hline
 6q^4+3q^6 & 9_{49} 
 \\ \hline 
\end{array}
\]
    \caption{$\top(K,q)$ of 9-crossing knots.}
    \label{table: 9-crossing knots}
  \end{table}

For higher crossing knots, we only consider the positive ones since, as mentioned in the introduction, all their minimal genus Seifert surfaces have free fundamental group \cite{Ozawa:incompressible-positive}, so it seems more subtle to decide whether they bound annular surfaces (which have free $\pi_1$) or not. In Table \ref{table: positive knots} we list all positive knots with $\leq 12$ crossings whose $\top(K,q)$ is a product of $a_n(q)$'s. For the 10-crossing knots in Table \ref{table: positive knots}, we did checked that they indeed bound an annular surface as specified by $\top(K,q)$ (only 3 of them were non-fibred $10_{128},10_{134},10_{142}$, the rest are fibred hence annular by the Giroux-Goodman theorem). We haven't checked which of the 11-crossing and 12-crossing knots of Table \ref{table: positive knots} are annular. There could be some non-annular ones, indeed, Kohli notices in \cite{Kohli:LG-Alexander} that there are non-fibred 12-crossing knots with $\Z[q^{\pm 1}]$-monic Links-Gould invariant (though none of them is positive).

\medskip

Table \ref{table: positive knots, non-annular} shows the positive knots up to 12-crossings with $\top(K,q)$ that is not a product of $a_n(q)$'s, hence none of them is annular. We checked that $\top(K,q)$ is not a product of $a_n(q)$'s with Mathematica, but it can also be seen easily with Proposition \ref{prop: top coeff. of annular links}. For instance, none of the knots with $\top(K,q)=3q^5+15q^4+18q^3$ is annular because the last coefficient is even (or just because $3|\top(K,q)$) and none of the knots with $\top(K,q)=3q^5+21q^4+25q^3$ is annular because $(3-1)+2(5-1)=10$ and $\deg_q\top(K,q)=2$.
\medskip

%The positive 10-crossing knots $10_k$ with $k=49, 53, 55, 63, 66, 80, 101, 120 $ are non-annular (e.g. by Proposition \ref{prop: top coeff. of annular links}), for $k= 124, 139, 152, 154, 161$ they are fibred (hence annular by the Giroux-Goodman theorem) and for $k=128, 134, 142$ they are non-fibred and we checked that they bound an annular surface with plumbing structure specified by $\top(K,q)$.\medskip

%Note that starting with 11-crossings, there are several non-fibred knots with $\top(K,q)$ equal to a power of $q$, as noted by Kohli in \cite{Kohli:LG-Alexander}.

On the basis of the tables, we conjecture that the top coefficient of all
positive knots is a positive integer combination of positive powers of $q$.

\begin{table}
\[
\begin{array}{c|c}
\text{{\bf top coefficient}}&\text{{\bf positive knots}}\\
\hline 
%q^2 & 3_1\\\hline 
% q^4 & 5_1\\\hline 
 %q^2 \left(q^2+3\right) & 5_2 \\  \hline 
 
 q^6 &  10_{154},  10_{161}, 11_{n183}
 \\ \hline %I erased 7_1,8_{19} from this line!!!
 %q^2 \left(q^4+3 q^2+5\right) & 7_2\\\hline 
% q^4 \left(q^2+3\right) & 7_3, 7_5  \\\hline 
 % q^2 \left(q^2+3\right)^2 & 7_4 \\ \hline 
 q^8 & 
 \begin{array}{c}
 10_{152}, 10_{124}, 10_{139}, 11_{n77}, 12_{n91}, 12_{n105},\\ 12_{n136}, 12_{n187}, 12n_{328}, 12_{n417}, 12_{n426},\\ 12_{n518}, 12_{n591}, 12_{n640},\\ 12n_{647}, 12_{n694},12_{n850}
\end{array} %I erased 9_1!!!
 \\ \hline 
 %q^2 \left(q^6+3 q^4+5 q^2+7\right) & 9_2  \\ \hline 
 q^6 \left(q^2+3\right) & 
  \begin{array}{c}
  10_{128}, 10_{134}, 10_{142}, 12_{n96},12_{n110}, 12_{n217},\\ 12_{n594}, 12_{n644}, 12_{n655}, 12_{n851}, 12_{n638}
\end{array}%I erased 9_3, 9_6, 9_9,  9_{16}!!!
 \\ \hline 
 %q^4 \left(q^4+3 q^2+5\right) & 9_4, 9_7  \\ \hline 
% q^2 \left(q^2+3\right) \left(q^4+3 q^2+5\right) & 9_5 \\ \hline 
 %q^4 \left(q^2+3\right)^2 & 9_{10}, 9_{13}, 9_{18}, 9_{23}\\ \hline 
 q^6 \left(q^2+3\right)^2 & 
\begin{array}{c}
 11_{a94} , 
 11_{a186} , 
 11_{a191} , 
 11_{a235} , 
 11_{a236} , 
 11_{a241} , 
 11_{a335} ,\\ 
 11_{a336} ,
 11_{a340} , 
 11_{a356} , 
 11_{a357} , 
 11_{a365} , 
 12_{n169} , 
 12_{n177} , 
 12_{n245} , \\
 12_{n289} , 
 12_{n308} , 
 12_{n341} , 
 12_{n477} , 
 12_{n503} , 
 12_{n585} , 
 12_{n600}
\end{array}
 \\ \hline 
  q^4 \left(q^2+3\right) \left(q^4+3 q^2+5\right) & 
\begin{array}{c}
  11_{a95},
  11_{a238},
  11_{a243} ,
  11_{a341} ,
  11_{a359} ,
  11_{a360}
\end{array}
 \\ \hline 
 q^4 \left(q^2+3\right)^3 & 
\begin{array}{c}
 11_{a192} ,
 11_{a237} ,
 11_{a337} ,
 11_{a366}
\end{array}
 \\ \hline 
  q^8 \left(q^2+3\right) & 
\begin{array}{c}
 11_{a234} ,
 11_{a240} ,
 11_{a263} ,
 11_{a334} ,
 11_{a338} ,
 11_{a355} ,
 11_{a364} ,
 12_{n74} ,\\
 12_{n153} ,
 12_{n166} ,
 12_{n243} ,
 12_{n244} ,
 12_{n292} ,
 12_{n305} ,
 12_{n338} ,
 12_{n374} ,\\
 12_{n386} ,
 12_{n473} ,
 12_{n474} ,
 12_{n502} ,
 12_{n575} ,
 12_{n576} ,
 12_{n680} ,\\
 12_{n689} ,
 12_{n691} ,
 12_{n692}
\end{array}
 \\ \hline 
 q^6 \left(q^4+3 q^2+5\right) & 
\begin{array}{c}
 11_{a242} ,
 11_{a245} ,
 11_{a339} ,
 11_{a358} ,
 12_{n77} ,
 12_{n251} ,
 12_{n581}
\end{array}
 \\ \hline 
 q^4 \left(q^6+3 q^4+5 q^2+7\right) & 
\begin{array}{c}
 11_{a246} ,
 11_{a342}
\end{array}
 \\ \hline 
 q^2 \left(q^8+3 q^6+5 q^4+7 q^2+9\right) & 
\begin{array}{c}
 11_{a247}
\end{array}
 \\ \hline 
  q^2 \left(q^4+3 q^2+5\right)^2 & 
\begin{array}{c}
 11_{a363} 
\end{array}\\ \hline
 q^{10} & 
\begin{array}{c}
 11_{a367} ,
 12_{n242} ,
 12_{n472} ,
 12_{n574} ,
 12_{n679} ,\\
 12_{n688} ,
 12_{n725} ,
 12_{n888} 
\end{array}
 \\ \hline 
\end{array}
\]
    \caption{Positive prime knots with 10-12 crossings such that its Links-Gould invariant has top coefficient which is a product of $a_n(q)$'s.}
    \label{table: positive knots}
  \end{table}

  \begin{table}
\[
\begin{array}{c|c}
\text{{\bf top coefficient}}&\text{{\bf positive knots}}\\
\hline 
% 3 q^4 \left(q^2+2\right) & 8_{15},9_{49} \\ \hline 
 %q^2 \left(q^6+9 q^4+20 q^2+19\right) & 9_{35} \\ \hline 
%  q^4 \left(q^4+10 q^2+14\right) & \begin{array}{c} 9_{38}\end{array} \\ \hline 
  3 q^6 \left(q^2+2\right) & 
\begin{array}{c}10_{49}, 10_{66}, 10_{80},
 11_{n93},  11_{n126},\\ 11_{n136}, 11_{n169}, 11_{n180}, 12_{n203},12_{n764} 
\end{array}
 \\ \hline 
  q^4 \left(q^6+17 q^4+54 q^2+49\right) & 
\begin{array}{c}
 11_{a329} 
\end{array}
 \\ \hline 
 q^2 \left(q^2+3\right) \left(q^6+3 q^4+5 q^2+7\right) & 
\begin{array}{c}
 11_{a343} 
\end{array}
 \\ \hline 
 q^2 \left(q^8+9 q^6+25 q^4+36 q^2+29\right) & 
\begin{array}{c}
 11_{a362} 
\end{array}
 \\ \hline 
  3 q^4 \left(q^2+2\right) \left(q^2+3\right) & 
\begin{array}{c}
 10_{53}, 11_{n171}
\end{array}
 \\ \hline 
  q^4 \left(3 q^4+10 q^2+12\right) & 
\begin{array}{c}
 10_{55}, 10_{63}, 11_{n181}
\end{array}
 \\ \hline 
 q^4 \left(5 q^4+21 q^2+23\right) & 
\begin{array}{c}
 10_{101}
\end{array}
 \\ \hline 
 q^4 \left(5 q^4+28 q^2+31\right) & 
\begin{array}{c}
 10_{120}
\end{array}
 \\ \hline 
  2 q^6 \left(3 q^2+5\right) & 
\begin{array}{c}
 11_{a43}, 12_{n88}, 12_{n133}, 12_{n259}, 12_{n806}
\end{array}
 \\ \hline 
 q^4 \left(q^6+13 q^4+35 q^2+32\right) & 
\begin{array}{c}
  11_{a123} ,
  11_{a320} ,
  11_{a354}
\end{array}
 \\ \hline 
 q^6 \left(q^4+10 q^2+14\right) & 
\begin{array}{c}
  11_{a124} ,
  11_{a227} ,
  11_{a244} ,
  11_{a291} ,
  11_{a298} ,
  11_{a318} ,
  11_{a319} ,\\
  11_{a353} ,
  12_{n100} ,
  12_{n406} ,
  12_{n453} ,
  12_{n758}
\end{array}
 \\ \hline 
 q^4 \left(q^6+9 q^4+20 q^2+19\right) & 
\begin{array}{c}
 11_{a200} ,
 11_{a361}
\end{array}
 \\ \hline 
 q^4 \left(q^2+3\right) \left(q^4+10 q^2+14\right) & 
\begin{array}{c}
 11_{a292} 
\end{array}
 \\ \hline 
 q^4 \left(q^6+10 q^4+27 q^2+26\right) & 
\begin{array}{c}
 11_{a299} 
\end{array}\\ \hline 
 3 q^6 \left(q^2+2\right) \left(q^2+3\right) & 
\begin{array}{c}
 12_{a43} ,
 12_{a53} ,
 12_{a82} ,
 12_{a94} ,
 12_{a144} ,
 12_{a145} ,
 12_{a295} ,\\
 12_{a368} ,
 12_{a575} ,
 12_{a814} ,
 12_{a877} 
\end{array}
 \\ \hline 
 3 q^8 \left(q^2+2\right) & 
\begin{array}{c}
 12_{a52} ,
 12_{a93} ,
 12_{a143} ,
 12_{a276} ,
 12_{a367} ,
 12_{a574} ,
 12_{a647} ,\\
 12_{a811} ,
 12_{a813} ,
 12_{a817} ,
 12_{a876} 
\end{array}
 \\ \hline 
 q^6 \left(3 q^4+10 q^2+12\right) & 
\begin{array}{c}
 12_{a55} ,
 12_{a96} ,
 12_{a277} ,
 12_{a344} ,
 12_{a355} ,\\
 12_{a420} ,
 12_{a442} ,
 12_{a648} 
\end{array}
 \\ \hline 
 q^4 \left(3 q^6+10 q^4+18 q^2+18\right) & 
\begin{array}{c}
 12_{a56} \\
 12_{a679} \\
\end{array}
 \\
 q^4 \left(q^2+3\right) \left(3 q^4+10 q^2+12\right) & 
\begin{array}{c}
 12_{a97} ,
 12_{a421} ,
 12_{a443}
\end{array}
 \\ \hline 
 q^6 \left(3 q^4+21 q^2+25\right) & 
\begin{array}{c}
 12_{a102}, 
 12_{a107} 
\end{array}
 \\ \hline 
 3 q^4 \left(q^2+2\right) \left(q^4+3 q^2+5\right) & 
\begin{array}{c}
 12_{a152} 
\end{array}
 \\ \hline 
 q^4 \left(3 q^6+24 q^4+52 q^2+42\right) & 
\begin{array}{c}
 12_{a156}
\end{array}
 \\ \hline 
 q^6 \left(5 q^4+21 q^2+23\right) & 
\begin{array}{c}
 12_{a293}  ,
 12_{a319} ,
 12_{a391} ,
 12_{a432} ,
 12_{a490} ,\\
 12_{a586} ,
 12_{a615} ,
 12_{a828} ,
 12_{a1035} 
\end{array}
 \\ \hline 
 q^4 \left(5 q^6+31 q^4+61 q^2+47\right) & 
\begin{array}{c}
 12_{a320},
 12_{a392}
\end{array}
 \\ \hline 
 q^4 \left(3 q^6+14 q^4+25 q^2+22\right) & 
\begin{array}{c}
 12_{a345},
 12_{a356} 
\end{array}
 \\ \hline 
 q^6 \left(5 q^4+28 q^2+31\right) & 
\begin{array}{c}
 12_{a431} ,
 12_{a659} ,
 12_{a900} ,
 12_{a973} ,\\
 12_{a995} ,
 12_{a1004} ,
 12_{a1112} 
\end{array}
 \\ \hline 
 q^4 \left(5 q^6+25 q^4+50 q^2+41\right) & 
\begin{array}{c}
 12_{a610},
 12_{a653} 
\end{array}
 \\ \hline 
 3 q^4 \left(q^2+2\right) \left(q^2+3\right)^2 & 
\begin{array}{c}
 12_{a880}
\end{array}
 \\ \hline 
 q^4 \left(5 q^6+32 q^4+73 q^2+59\right) & 
\begin{array}{c}
 12_{a974} 
\end{array}
 \\ \hline 
 q^4 \left(5 q^6+38 q^4+86 q^2+67\right) & 
\begin{array}{c}
 12_{a996} 
\end{array}
 \\ \hline 
 q^4 \left(q^2+3\right) \left(5 q^4+21 q^2+23\right) & 
\begin{array}{c}
 12_{a1037} 
\end{array}
 \\ \hline 
 q^4 \left(7 q^6+52 q^4+113 q^2+84\right) & 
\begin{array}{c}
 12_{a1097} 
\end{array}
 \\ \hline 
 q^4 \left(5 q^6+43 q^4+100 q^2+77\right) & 
\begin{array}{c}
 12_{a1113} 
\end{array}
 \\ \hline 
 9 q^4 \left(q^2+2\right)^2 & 
\begin{array}{c}
 12_{n881} 
\end{array}
 \\
\end{array}
\]
    \caption{Positive prime knots with 10-12 crossings such that its Links-Gould invariant has top coefficient that is not a product of $a_n(q)$'s.}
    \label{table: positive knots, non-annular}
  \end{table}

\section{Additional comments}
\label{section: additional comments}

In this section we discuss the following question, which motivated our study of links bounding annular surfaces: is there an analogue of the Giroux-Goodman theorem for strongly quasi-positive links? More precisely\footnote{Note that in the case of fiber surfaces/fibred links, both questions are equivalent (since the fiber surface is the unique minimal genus Seifert surface of a fibred link) and they are answered by the theorem of Giroux-Goodman.}:
\begin{enumerate}
    \item Is there a simple set of QP surfaces from which all other QP surfaces are obtained by plumbing/deplumbing?
    \item Is there a simple set of QP surfaces from which all SQP links bound a surface made by plumbing/deplumbing surfaces in such a set?
\end{enumerate}
\medskip

Recall that a surface $\S$ embedded in $S^3$ is called {\em quasi-positive} (QP) if it embeds as an incompressible subsurface of the fiber surface $F_T$ of a positive torus link $T$. Here, incompressible means that $\pi_1(\S)\to\pi_1(F_T)$ is injective. This is equivalent to the original definition of Rudolph in terms of braids \cite{Rudolph:quasipositive3-characterization}. A link is {\em strongly quasi-positive} (SQP) if it bounds a quasi-positive surface. The above questions are motivated by the fact that being a QP surface is stable under plumbing and deplumbing (or more generally, Murasugi sum) \cite{Rudolph:quasipositive-plumbing}. 
\medskip

We had considered the set of positively twisted annuli $A_n,n>0$ as a candidate for the above two questions. We will say that a surface is {\em positive annular} if it is obtained by plumbing/deplumbing annuli $A_n$ with $n>0$, positive annular surfaces are QP by \cite{Rudolph:quasipositive-plumbing}. It is easy to see that not all QP surfaces are positive annular: indeed, an easy Seifert van Kampen argument shows that positive annular surfaces have free fundamental group, which is not the case for general QP surfaces. The following example was provided to us by Sebastian Baader: for every knot $K$ there is a QP annulus $\S_K$ having that knot as core, hence if the knot is non-trivial then $\pi_1(S^3\sm \S_K)$ is not free. 
\medskip

The question of whether any SQP link bounds a positive annular surface is a bit more subtle than the previous one. Indeed, if a surface is QP but not annular (as in Baader's example), it could still be possible that its boundary bounds another minimal genus Seifert surface that is positive annular. Note that non-fibred links may have several non-isotopic minimal genus Seifert surfaces (indeed, by \cite[Corollary 3.2]{Gabai:fibred}, the result of Murasugi summing two non-fibred surfaces always results in a link with more than one minimal genus Seifert surface). 
\medskip

Still, we answer this in the negative, even for the smaller class of positive links. Note that every positive link is SQP \cite{Rudolp:positive-are-SQ} and {\em all} their genus-minimizing Seifert surfaces have free fundamental group \cite{Ozawa:incompressible-positive}. As explained in Section \ref{section: computations}, a consequence of Corollary \ref{Corollary: obstruction to bound ANNULAR surface} is that the positive links of Table \ref{table: positive knots} do not bound annular surfaces at all. %Indeed, if $\S'$ is obtained by plumbing an annuli to a surface $\S$ then $S^3\sm \S'\t I$ is obtained by drilling a hole to $S^3\sm \S\t I$, or equivalently, attaching a 1-handle, so that $\pi_1(S^3\sm \S')\cong \pi_1(S^3\sm\S)*\Z$, hence $\pi_1(S^3\sm \S')$ is free if and only if $\pi_1(S^3\sm\S)$ is so. 

\medskip

%\textcolor{blue}{Can we rule out Baader's annulus for the trefoil to bound a positive annular surface? Compute! Give an example of a non-annular QP surface for which the boundary bounds an annular one.}
\medskip

\begin{comment}
    
\textbf{Is every positive knot the boundary of a surface obtained by plumbing/deplumbing positive iterated Hopf bands?}
\medskip

We answer this in the negative: if $\S$ is a surface obtained this way, then $LG(\p S)$ has top coefficient a polynomial $f(q)\in \Z[q]$ which is monic in $q$ (i.e. top coefficient in $q$ is $\pm 1$). This follows from the fact that $f(q)$ has to be a product of various $a_n(q)$'s by plumbing-multiplicativity, and all the $a_n(q)$'s are monic if $n>0$. We checked that for various positive knots, the top-top coefficient is not $\pm 1$.

\end{comment}
\medskip

\bibliographystyle{amsplain}
\bibliography{referencesabr.bib}

\end{document}